\numberwithin{equation}{section}
\newtheorem{thm}{Theorem}[section]
\newtheorem{prop}[thm]{Proposition}
\newtheorem{defn}[thm]{Definition}
\newtheorem{rem}[thm]{Remark}
\numberwithin{equation}{section}
\def\br{{\mathbb R}}
\begin{document}

\title[On $\xi^{(s)}$-Quadratic Stochastic Operator]{
On $\xi^{(s)}$-Quadratic Stochastic Operators on two Dimensional simplex and their behavior}

\author{Farrukh Mukhamedov}
\address{Farrukh Mukhamedov\\
 Department of Computational \& Theoretical Sciences\\
Faculty of Science, International Islamic University Malaysia\\
P.O. Box, 141, 25710, Kuantan\\
Pahang, Malaysia} \email{{\tt far75m@yandex.ru} {\tt
farrukh\_m@iium.edu.my}}

\author{Mansoor Saburov}
\address{Mansoor Saburov\\
 Department of Computational \& Theoretical Sciences\\
Faculty of Science, International Islamic University Malaysia\\
P.O. Box, 141, 25710, Kuantan\\
Pahang, Malaysia} \email{{\tt msaburov@iium.edu.my, msaburov@gmail.com}}

\author{Izzat Qaralleh}
\address{Izzat Qaralleh\\
 Department of Computational \& Theoretical Sciences\\
Faculty of Science, International Islamic University Malaysia\\
P.O. Box, 141, 25710, Kuantan\\
Pahang, Malaysia} \email{{\tt izzat\_math@yahoo.com}}

%\address{$^{1,2,3}$ Department of Computational \& Theoretical
%Sciences, Faculty of Science, International Islamic University
%Malaysia, Kuantan, Pahang, Malaysia}

\begin{abstract}
 A quadratic stochastic operator (in short QSO) is usually used to
present the time evolution of differing species in biology. Some
quadratic stochastic operators have been studied by Lotka and Volterra. The general problem in the nonlinear operator theory is to study the behavior of
operators. This problem was not fully finished even for
quadratic stochastic operators which are the simplest nonlinear operators. To study this problem, it was investigated several classes of QSO. In this paper, we study
$\xi^{(s)}$--QSO defined on $2$D simplex. We first classify $\xi^{(s)}$--QSO
into $20$ non-conjugate classes. Further, we investigate the dynamics of three
classes of such operators.
\\[2mm]
\noindent {\it Mathematics Subject Classification 2010}:
37E99; 37N25; 39B82, 47H60, 92D25.\\
{\it Key words}: Quadratic stochastic operator; $\ell-$Volterra
quadratic stochastic operator; $\xi^{(s)}-$quadratic stochastic
operator; permuted $\ell-$Volterra quadratic stochastic operator;
dynamics
\end{abstract}

\maketitle

%\ead{}

\section{Introduction}

The history of quadratic stochastic operators can be traced back
to Bernstein's work \cite{1}. The quadratic stochastic operator
was considered an important source of analysis for the study of
dynamical properties and modelings in various fields such as
biology \cite{1,3,4,2,Lo,6,V1}, physics \cite {8,7},
economics and mathematics \cite{4,5,6,9}.

One of such systems which
relates to the population genetics is given by a quadratic stochastic operator \cite{1}. A quadratic stochastic
operator (in short QSO) is usually used to present the time
evolution of species in biology, which arises as follows. Consider a
population consisting of $m$ species (or traits) $1,2,\cdots,m$. We denote a set of all species (traits) by $I=\{1,2,\cdots,m\}$.  Let $x^{(0)}=\left(x_1^{(0)},\cdots,x_m^{(0)}\right)$ be
a probability distribution of species at an initial state
and $P_{ij,k}$ be a probability that individuals in the $i^{th}$ and
$j^{th}$ species (traits) interbreed to produce an individual from $k^{th}$ species (trait). Then a probability distribution $x^{(1)}=\left(x_{1}^{(1)},\cdots,x_{m}^{(1)}\right)$ of the
spices (traits) in the first generation can be found as a total
probability,  i.e.,
\begin{equation*} x_k^{(1)}=\sum_{i,j=1}^m P_{ij,k} x_i^{(0)} x_j^{(0)}, \quad k=\overline{1,m}.
\end{equation*}
This means that the association $x^{(0)} \to x^{(1)}$ defines a mapping $V$
called \textit{the evolution operator}. The population evolves by starting
from an arbitrary state $x^{(0)},$ then passing to the state
$x^{(1)}=V(x^{(0)})$ (the first generation), then to the state
$x^{(2)}=V(x^{(1)})=V(V(x^{(0)}))=V^{(2)}\left(x^{(0)}\right)$ (the second generation), and so on. Therefore, the evolution states of the population system  described by the following discrete dynamical system
$$x^{(0)}, \quad x^{(1)}=V\left(x^{(0)}\right), \quad x^{(2)}=V^{(2)}\left(x^{(0)}\right), \quad x^{(3)}=V^{(3)}\left(x^{(0)}\right) \cdots$$

In other words, a QSO describes a distribution of the next
generation if the distribution of the current generation was
given.  The fascinating applications of QSO to population
genetics were given in \cite{6}.

In \cite{11}, it was given along self-contained exposition of the recent achievements and open problems in the theory of the QSO.
The main problem in the nonlinear operator theory is to study the
behavior of nonlinear operators. This problem was not fully
finished even in the class of QSO (the QSO is the simplest nonlinear
operator). The difficulty of the problem
depends on the given cubic matrix $(P_{ijk})_{i,j,k=1}^m$. An asymptotic behavior of the QSO even on the small dimensional simplex is complicated \cite{15,SM,16,9,17}. In order to solve this problem,
many researchers always introduced a certain class of QSO and
studied their behavior. For examples, Volterra-QSO \cite{12,121,13,14,9}, permutated Volterra-QSO \cite{18,19}, Quasi-Volterra-QSO \cite{24}, $\ell$-Volterra-QSO \cite{23,231},
non-Volterra-QSO \cite{15,16}, strictly non-Volterra-QSO \cite{26}, F-QSO
\cite{25}, and non Volterra
operators generated by product measure \cite{20,21,22}. However, all these
classes together would not cover a set of all QSO. Therefore, there
are many classes of QSO which were not studied yet. Recently, in the
papers \cite{27,28}, a new class of QSO was introduced. This class
was called a $\xi^{(s)}$-QSO. In this paper, we are going to continue
the study of $\xi^{(s)}$-QSO. This class of operators depends on a
partition of the coupled index set (the coupled trait set) ${\bf P}_{m}=\{(i,j): i<j\}\subset I\times I$. In case of two dimensional simplex ($m=3$), the
coupled index set (the coupled trait set) ${\bf P}_3$ has five possible partitions. The dynamics of $\xi^{(s)}$-QSO corresponding to the point partition (the maximal partition) of ${\bf P}_3$ have been
investigated in \cite{27,28}. In the
present paper, we are going to describe and classify such operators
generated by other three partitions. Further, we also investigate
the dynamics of three classes of such operators.

The paper is organized as follows: In Sec. 2, we give some
preliminary definitions. In Sec. 3, we discuss the classification of
$\xi^{(s)}$-QSO related to $|\xi|=2.$ It turns out that some obtained
operators are $\ell$-Volterra-QSO (see \cite{23,231}), permuted
$\ell$-Volterra-QSO. The dynamics of $\ell$-Volterra-QSO are not fully
studied yet. In \cite{23,231}, some particular cases have been
investigated, which do not cover our operators. Therefore, in
further sections, we study dynamics of $\ell$-Volterra-QSO and permuted
$\ell$-Volterra-QSO. In Sec. 4, we study the behavior of
$\ell$-Volterra-QSO $V_{13}$ taken from the class $K_1.$ In Sec.5, we study the behavior of a permuted $\ell$-Volterra-QSO $V_{4}$ taken from the class $K_{4}$. Note that
$V_{4}$ is a permutation of $V_{13}$. In Sec. 6,
we study the behavior of a permuted Volterra-QSO $V_{28}$ taken
from the class $K_{19}.$  In the last section, we just highlight the dynamics of  Volterra-QSO $V_{25}$ taken from the class $K_{17}$ which was already studied in \cite{12}-\cite{13}.

\section{Preliminaries}

Recall that a quadratic stochastic operator (QSO) is a mapping of
the simplex
\begin{eqnarray}\label{1}
S^{m-1}=\left\{x=(x_{1},\cdots, x_{m})\in\mathbb{R}^m:\
\ \sum_{i=1}^mx_{i}=1, \ \  x_{i}\geq0, \ \ i=\overline{1,m} \right\}
%\Phi(x)^*\Phi(x)\leq \Phi(x^*x) \ \ \textrm{for all} \ \ x\in A.
\end{eqnarray}
into itself, of the form
\begin{eqnarray}\label{2}
x'_{k}=\sum_{i,j=1}^mP_{ij,k}x_{i}x_{j}, \ \ \ k=\overline{1,m},
\end{eqnarray}
where $V(x)=x'=(x'_1,\cdots,x'_m)$ and $P_{ij,k}$ is a coefficient of heredity, which satisfies the following conditions
\begin{eqnarray}\label{3}
P_{ij,k}\geq0,\quad P_{ij,k}=P_{ji,k},\quad \sum_{k=1}^mP_{ij,k}=1.
\end{eqnarray}
Thus, each quadratic stochastic operator $V:S^{m-1}\to S^{m-1}$ can be uniquely
defined by a cubic matrix ${\mathcal{P}}=\big(P_{ijk}\big)_{i,j,k=1}^m$ with
conditions \eqref{3}.

We denote sets of fixed points and $k-$periodic points of $V:S^{m-1}\to S^{m-1}$ by $Fix(V)$ and $Per_{k}(V)$, respectively. Due to Brouwer's fixed point theorem, one always has that $Fix(V)\neq\emptyset$ for any QSO $V$. For a given point $x^{(0)}\in S^{m-1}$, a trajectory $\{x^{(n)}\}_{n=0}^\infty$ of $V:S^{m-1}\to S^{m-1}$ starting from $x^{(0)}$ is defined by $x^{(n+1)}=V(x^{(n)})$. By $\omega_{V}\left(x^{(0)}\right)$, we denote a set of omega limiting points of the trajectory $\{x^{(n)}\}_{n=0}^\infty$. Since $\{x^{(n)}\}_{n=0}^\infty\subset S^{m-1}$ and $S^{m-1}$ is compact, one has that $\omega_{V}\left(x^{(0)}\right)\neq\emptyset$. Obviously, if ${\omega_{V}}(x^{(0)})$ consists of a single point, then the trajectory converges and a limiting point is a fixed point of $V:S^{m-1}\to S^{m-1}$.

Recall that a Volterra-QSO is defined by \eqref{2}, \eqref{3} and
the additional assumption
\begin{equation}\label{Vol}
P_{ij,k}=0  \quad \text{if}\quad  k\not\in \{i,j\}. \end{equation}

The biological treatment of condition \eqref{Vol} is clear: \textit{the
offspring repeats the genotype (trait) of one of its parents.}

One can see that a Volterra-QSO has the following form:
\begin{equation}\label{Vol2}
x'_k=x_k\left(1+\sum^m_{i=1}a_{ki}x_i\right),\ \ k\in
I,
\end{equation} where
\begin{equation}\label{8}
a_{ki}=2P_{ik,k}-1 \ \ \mbox{for}\, i\neq k\,\,\mbox{and}\
\,a_{ii}=0, \ i \in I.
\end{equation}
 Moreover,
$$a_{ki}=-a_{ik}\ \ \mbox{and} \ \ |a_{ki}| \leq 1.$$

This kind of operators intensively studied in \cite{10,12,121,13,14,9}.
Note that this operator is a discretization of the Lotka--Volterra
model \cite{Lo,V1} which models an interacting competing species in
the population system. Such a model has received considerable attention in the
fields of biology, ecology, mathematics (see for example
\cite{3,4,8,V1}).

In \cite{23}, it has been introduced a notion of $\ell$-Volterra-QSO, which generalizes a notion of Volterra-QSO. Let us recall it here.

In order to introduce a new class of QSO, we need some auxiliary
notations.

We fix $\ell \in I$ and assume that elements $P_{ij,k}$ of the matrix
$(P_{ij,k})_{i,j,k=1}^m$ satisfy
\begin{equation}\label{11}
 P_{ij,k}= 0 \ \ \mbox{if} \ \ k \not\in \{i,j\}\ \ \mbox{for }\ \mbox{any}\ \ k\in \{1,\dots,\ell\},\ \ i,j \in I,
\end{equation}
\begin{equation}\label{11a}
 P_{i_0j_0,k}> 0 \ \ \text{for some} \  (i_0,j_0),\  i_0\neq
k,\ j_0\neq k, \ \  k \in
\{\ell+1,\dots,m\}.
\end{equation}

For any fixed $\ell \in I$, the QSO defined by \eqref{2}, \eqref{3},
\eqref{11} and \eqref{11a} is called {\it $\ell$-Volterra-QSO}.

\begin{rem} Here, we stress the following points:
\begin{enumerate}
\item[1.] Note that an $\ell$-Volterra-QSO is a Volterra-QSO if and only if
$\ell= m$.

\item[2.] It is known \cite{12} that there is not a periodic
trajectory for Volterra-QSO. However, there are
such trajectories for $\ell$-Volterra-QSO \cite{23}.
\end{enumerate}
\end{rem}

By following \cite{231}, take $k\in \{1,...,\ell\}$, then $P_{kk,i}=0$
for $i\ne k$ and
$$1=\sum_{i=1}^mP_{kk,i}=P_{kk,k}+\sum_{i=\ell+1}^mP_{kk,i}.$$
By using $P_{ij,k}=P_{ji,k}$ and denoting $a_{ki}=2P_{ik,k}-1,
k\ne i$, $a_{kk}=P_{kk,k}-1$ one then gets
\begin{equation}\label{l-V}
V:
\begin{cases}
x_k'=x_k\left(1+\sum\limits^m_{i=1}a_{ki}x_i\right)\ \ \text{if} \ \  k=
\overline{1,\ell}\\
x_k' =x_k\left(1+\sum\limits^m_{i=1}a_{ki}x_i\right)+\sum\limits^m_{{i,j= 1\atop
i\neq k, j\neq k}}P_{ij,k}x_ix_j \ \ \text{if} \ \ k=\overline{\ell+1,m}.
\end{cases}
\end{equation}

This is a canonical form of $\ell$-Volterra-QSO.

Note that
$$a_{kk}\in [-1,0], \ |a_{ki}|\leq 1, \ a_{ki}+a_{ik}=2(P_{ik,i}+P_{ik,k})-2\leq 0,\ \
i,k\in I.$$

We call that an operator $V$ is {\it permuted $\ell$-Volterra-QSO},
if there is a permutation $\tau$ of the set $I$ and an
$\ell$-Volterra-QSO $V_0$ such that $(V(x))_{\tau(k)}=(V_0(x))_k$ for any
$k\in I$. In other words, $V$ can be represented as follows:

\begin{equation}\label{l-V-t}
V_\tau:
\begin{cases}
x_{\tau(k)}'
=x_k\left(1+\sum\limits^m_{i=1}a_{ki}x_i\right)\ \ \text{if} \ \ k=
\overline{1,\ell}\\
x_{\tau(k)}'=x_k\left(1+\sum\limits^m_{i=1}a_{ki}x_i\right)+\sum\limits^m_{{i,j=
1\atop i\neq k, j\neq k}}P_{ij,k}x_ix_j \ \ \text{if} \ \ k=\overline{\ell+1,m}.
\end{cases}
\end{equation}

We remark that if $\ell=m$ then a permuted $\ell$-Volterra-QSO becomes
a permuted Volterra-QSO. Some properties of such operators were
studied in \cite{13,GK}. The Dynamics of certain class of permuted
Volterra-QSO has been investigated in \cite{28}. Note that in
\cite{23,231}, it has been studied a class of
$\ell$-Volterra-QSO. An asymptotic behavior of permuted
$\ell$-Volterra-QSO has not been investigated yet. Some
particular cases has been considered in \cite{27,28}.

In this paper, we are going  to introduce a new class of QSO which
contain $\ell$-Volterra-QSO and permuted $\ell$-Volterra-QSO as a
particular case.

Note that each element $x\in S^{m-1}$ is a
probability distribution of the set $I=\{1,...,m\}.$ Let
$x=(x_{1},\cdots,x_{m})$ and $y=(y_{1},\cdots,y_{m})$ be vectors taken from $S^{m-1}$. We say that {\it $x$ is equivalent to $y$} if $x_{k}=0$
$\Leftrightarrow$ $y_{k}=0$. We denote this relation by $x\sim y$.

Let $supp(x)=\{i:x_{i}\neq0\}$ be a support of $x\in S^{m-1}$. We say that {\it $x$ is
singular to  $y$} and denote by $x\perp y$, if $supp(x)\cap supp(y)=\emptyset.$ Note that if $x,y\in S^{m-1}$
 then $x\perp y$ if and only if $(x,y)=0$, here $(\cdot,\cdot)$ stands for a standard inner product in $\br^m$.

We denote sets of coupled indexes by
$${\bf P}_m=\{(i,j):\ i<j\}\subset I\times I, \quad \Delta_m=\{(i,i): i\in I\}\subset I\times I.$$
For a given pair $(i,j)\in \mathbf{P}_m\cup \Delta_m$, we set a  vector $\mathbb{P}_{ij}=\left(P_{ij,1},\cdots, P_{ij,m}\right)$. It is clear due to the condition \eqref{3} that $\mathbb{P}_{ij}\in S^{m-1}$.

Let $\xi_1=\{A_{i}\}_{i=1}^N$ and $\xi_2=\{B_{i}\}_{i=1}^M$ be some
fixed partitions of ${\bf P}_m$ and $\Delta_m$, respectively,  i.e.
$A_{i}\bigcap A_{j}=\emptyset$, $B_{i}\bigcap B_{j}=\emptyset$, and
$\bigcup\limits_{i=1}^N A_{i}=\mathbf{P}_m$, $\bigcup\limits_{i=1}^M
B_{i}=\Delta_m$, where $N,M\leq m$.

\begin{defn}\label{Xi-QSO}
A quadratic stochastic operator $V:S^{m-1}\to S^{m-1}$ given by
\eqref{2}, \eqref{3}, is called a $\xi^{(as)}$-QSO w.r.t. the
partitions $\xi_1,\xi_2$ (where the letter "as" stands for
absolutely continuous-singular) if the following conditions are
satisfied:
 \begin{enumerate}
\item[(i)] for each $k\in\{1,\dots,N\}$ and any
$(i,j)$, $(u,v)\in A_{k}$, one has that $\mathbb{P}_{ij}\sim \mathbb{P}_{uv}$;

 \item[(ii)] for any $k\neq \ell$, $k,\ell\in\{1,\dots,N\}$ and
 any $(i,j)\in A_{k}$ and $(u,v)\in A_{\ell}$  one has  that $\mathbb{P}_{ij}\perp\mathbb{P}_{uv}$;

\item[(iii)] for each $d\in\{1,\dots,M\}$ and any
$(i,i)$, $(j,j)\in B_{d}$, one has that $\mathbb{P}_{ii}\sim
\mathbb{P}_{jj}$;

 \item[(iv)] for any $s\neq h$, $s,h\in\{1,\dots,M\}$ and
 any $(u,u)\in B_{s}$ and $(v,v)\in B_{h}$
 one has that $\mathbb{P}_{uu}\perp\mathbb{P}_{vv}$.
\end{enumerate}
\end{defn}

\begin{rem}
If $\xi_2$ is the point partition, i.e.
$\xi_2=\{\{(1,1)\},\dots\{(m,m)\}\}$, then we call the corresponding
QSO by $\xi^{(s)}$-QSO (where the letter "s" stands for
singularity), since in this case every two different vectors
$\mathbb{P}_{ii}$ and $\mathbb{P}_{jj}$ are singular. If $\xi_2$ is
the trivial, i.e. $\xi_2=\{\Delta_m\}$, then we call the
corresponding QSO by $\xi^{(a)}$-QSO (where the letter "a" stands
for absolute continuous), since in this case every two vectors
$\mathbb{P}_{ii}$ and $\mathbb{P}_{jj}$ are equivalent. We note that
some classes of $\xi^{(a)}$-QSO have been studied in \cite{MQF}. In
the present paper, we restrict ourselves to the $\xi^{(s)}$-case.
Note that, in general, the class of $\xi^{(as)}$-QSO will be studied
elsewhere in the future.
\end{rem}

\begin{rem}\label{condition(iii)} For the $\xi^{(s)}$-QSO, i.e., in the case $\xi_2=\{\{(1,1)\},\dots\{(m,m)\}\}$, the condition (iii) of Definition \ref{Xi-QSO} is trivial  and the condition (iv) means that there is a permutation $\pi$ of the set $I=\{1,\cdots, m\}$ such that $\mathbb{P}_{ii}=e_{\pi(i)}$ for any $i=\overline{1,m}$ where $e_k=(0,\cdots,0, \underbrace{1}\limits_{k}, 0,\cdots, 0),$ $k=\overline{1,m}$, are vertices of the simplex $S^{m-1}$.
\end{rem}

\textsc{A biological interpretation of a $\xi^{(s)}-$QSO:} We treat
$I=\{1,\cdots,m\}$ as a set of all possible traits of the population
system. A coefficient $P_{ij,k}$ is a probability that parents in
the $i^{th}$ and $j^{th}$ traits interbreed to produce a child from
the $k^{th}$ trait. The condition $P_{ij,k}=P_{ji,k}$ means that the
gender of parents do not influence to have a child from the $k^{th}$
trait. In this sense, $\mathbf{P}_m\cup \Delta_m$ is a set of all
possible coupled traits of parents. A vector
$\mathbb{P}_{ij}=\left(P_{ij,1},\cdots, P_{ij,m}\right)$ is a
possible distribution of children in a family while parents are
carrying traits from the $i^{th}$ and $j^{th}$ types. A biological
meaning of a $\xi^{(s)}-$QSO is as follows: a set $\mathbf{P}_m$ of
all differently coupled traits of parents is splitted  into $N$
groups $A_1,\cdots, A_N$ (here $N$ is less than the number $m$ of
traits) such that the chance (probability) of having a child from
any trait in two different family whose parents' coupled traits
belong to the same group $A_k$ is simultaneously either positive or
zero (the condition (i) of Definition \ref{Xi-QSO}), meanwhile, two
family whose parents' coupled traits belong to two different groups
$A_k$ and $A_l$ cannot have a child from the same trait,
simultaneously (the condition (ii) of Definition \ref{Xi-QSO}).
Moreover, the parents which are sharing the same type of traits can
have a child from {only} one type of traits (the condition (iv) of
Definition \ref{Xi-QSO} and Remark \ref{condition(iii)}).

 \section{Classification of $\xi^{(s)}-$QSO on 2D simplex}

In this section, we are going to study $\xi^{(s)}-$QSO in two
dimensional simplex, i.e. $m=3$. In this case, we
 have the following possible partitions of $\mathbf{P}_3$
\begin{eqnarray*}
&&\xi_{1}=\{\{(1,2)\},\{(1,3)\},\{(2,3)\}\},\ |\xi_{1}|=3,\\
&&\xi_{2}=\{\{(2,3)\},\{(1,2),(1,3)\}\},\ |\xi_{2}|=2,\\
&&\xi_{3}=\{\{(1,3)\},\{(1,2),(2,3)\}\},\ |\xi_{3}|=2,\\
&&\xi_{4}=\{\{(1,2)\},\{(1,3),(2,3)\}\}, \ |\xi_{4}|=2,\\
&&\xi_{5}=\{(1,2),(1,3),(2,3)\},\ |\xi_{5}|=1.
\end{eqnarray*}

We note that in \cite{27,28}, it has been investigated
$\xi^{(s)}$-QSO related to the partition $\xi_1$ which is the maximal partition of $\mathbf{P}_3$. In this paper, we are aiming to study $\xi^{(s)}$-QSO related to the partitions $\xi_2, \xi_3, \xi_4$. We shall show that these three classes of $\xi^{(s)}$-QSO are conjugate each other. Therefore, it is enough to study $\xi^{(s)}$-QSO related to the partition $\xi_2$. A class of $\xi^{(s)}$-QSO related to the partition $\xi_5$ will be studied in elsewhere in the future.

Let us recall that two operators $V_{1},V_{2}$ are called {\it (topologically or linearly) conjugate}, if there is a permutation matrix $P$ such that $P^{-1}
V_{1}P=V_{2}$. Let $\pi$ be a permutation of the set $I=\{1,\cdots,m\}$.  For any vector $x$, we define $\pi(x)=(x_{\pi(1)},\cdots, x_{\pi(m)})$.  It is easy to check that if $\pi$ is a permutation of the set $I$ corresponding to the given permutation matrix $P$ then one has that $Px=\pi(x)$. Therefore, two operators $V_{1},V_{2}$ are conjugate if and only if $\pi^{-1}V_1\pi=V_2$ for some permutation $\pi$. Throughout this paper, we shall consider "conjugate operators" in this sense. We say that two classes $K_1$ and $K_2$ of operators are conjugate if every operator taken from $K_1$ is conjugate to some operator taken from $K_2$ and vise versus.

\begin{prop}\label{iz0}
A class of all $\xi^{(s)}-$QSO corresponding to the partition $\xi_{3}$
(or $\xi_{4}$) is conjugate to a class of all $\xi^{(s)}-$QSO
corresponding to the partition $\xi_{2}.$
\end{prop}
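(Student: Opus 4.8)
The plan is to exhibit, for each of the partitions $\xi_3$ and $\xi_4$, an explicit permutation $\pi$ of $I=\{1,2,3\}$ that carries it onto $\xi_2$, and then to check that this permutation simultaneously respects the structure of Definition \ref{Xi-QSO}, so that conjugation by $\pi$ sends $\xi^{(s)}$-QSO of one type to $\xi^{(s)}$-QSO of the other. First I would observe that a permutation $\pi$ of $I$ acts on the coupled index set $\mathbf{P}_3$ by $(i,j)\mapsto(\pi(i),\pi(j))$ (reordering the pair so the smaller index comes first), and hence acts on partitions of $\mathbf{P}_3$. A direct inspection of the five partitions listed above shows: the transposition $\pi=(2\,3)$ sends $\{(1,3)\}\mapsto\{(1,2)\}$ and $\{(1,2),(2,3)\}\mapsto\{(1,3),(2,3)\}$, i.e. it maps $\xi_3$ onto $\xi_4$; and the transposition $\pi=(1\,3)$ sends $\{(1,3)\}\mapsto\{(1,3)\}$ while sending $\{(1,2),(2,3)\}\mapsto\{(2,3),(1,2)\}$ — that needs rechecking — more usefully, $\pi=(1\,2)$ sends $\xi_3=\{\{(1,3)\},\{(1,2),(2,3)\}\}$ to $\{\{(2,3)\},\{(1,2),(1,3)\}\}=\xi_2$. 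So it suffices to identify one permutation taking $\xi_3\to\xi_2$ and one taking $\xi_4\to\xi_2$ (or to compose the $\xi_3\to\xi_4$ map with a $\xi_4\to\xi_2$ map).

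Next I would set up the conjugation precisely. Given a $\xi^{(s)}$-QSO $V$ with cubic matrix $(P_{ij,k})$ satisfying Definition \ref{Xi-QSO} w.r.t. $\xi_1$ (one of $\xi_3,\xi_4$) and the point partition $\xi_2^{\Delta}$ of $\Delta_3$, define $W=\pi^{-1}V\pi$. A short computation shows $W$ is again a QSO whose heredity coefficients are $Q_{ij,k}=P_{\pi(i)\pi(j),\pi(k)}$; one must verify that the conditions \eqref{3} are preserved (immediate, since $\pi$ merely permutes the entries) so that $W$ is a genuine QSO. Then the vectors attached to $W$ are $\mathbb{Q}_{ij}=(Q_{ij,1},\dots,Q_{ij,m})=\pi^{-1}(\mathbb{P}_{\pi(i)\pi(j)})$, and since a permutation of coordinates preserves both the equivalence relation $\sim$ and the singularity relation $\perp$ (they are defined only through the pattern of zero/nonzero coordinates), the clauses (i)--(iv) of Definition \ref{Xi-QSO} for $W$ w.r.t. the image partition $\pi^{-1}(\xi_1)$ are equivalent to the same clauses for $V$ w.r.t. $\xi_1$. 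Choosing $\pi$ as in the first paragraph so that $\pi^{-1}(\xi_1)=\xi_2$ (noting the point partition of $\Delta_3$ is fixed by any permutation, so Remark \ref{condition(iii)} is unaffected), we conclude that $W$ is a $\xi^{(s)}$-QSO corresponding to $\xi_2$. Running the argument with $\pi^{-1}$ in place of $\pi$ gives the converse inclusion, so the two classes are conjugate in the sense defined before the proposition; the same works verbatim for $\xi_4$.

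The main thing to be careful about — the only place where something could go wrong — is the bookkeeping in the action of $\pi$ on ordered pairs: because elements of $\mathbf{P}_3$ are written with $i<j$, applying $\pi$ may require swapping the two components, and one must make sure that the induced map on partitions is the one claimed. I would handle this by simply tabulating $\pi$ on all three pairs $(1,2),(1,3),(2,3)$ for the chosen transpositions, which is a finite check. Everything else — that conjugation of a QSO is a QSO, that coordinate permutations commute with $\sim$ and $\perp$, that the point partition of the diagonal is permutation-invariant — is routine and I would state it in a line or two rather than belabor it.
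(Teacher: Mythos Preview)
Your approach is essentially identical to the paper's: both pick the transposition $\pi=(1\,2)$ to carry $\xi_3$ to $\xi_2$, form $V_\pi=\pi^{-1}V\pi$ with coefficients $P^\pi_{ij,k}=P_{\pi(i)\pi(j),\pi(k)}$, and verify that the $\sim$/$\perp$ conditions transfer (the paper checks (i)--(iii) by hand, you invoke the general fact that coordinate permutations preserve support patterns). One small slip: you write $\mathbb{Q}_{ij}=\pi^{-1}(\mathbb{P}_{\pi(i)\pi(j)})$, whereas the correct formula under the paper's convention $(\pi x)_k=x_{\pi(k)}$ is $\mathbb{Q}_{ij}=\pi(\mathbb{P}_{\pi(i)\pi(j)})$; this is harmless here because every permutation you actually use is an involution, but worth tidying.
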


\begin{proof} We show that two classes of all $\xi^{(s)}-$QSO corresponding to the partitions $\xi_{2}$ and $\xi_{3}$ are conjugate each other. Analogously, one can show that two classes of all $\xi^{(s)}-$QSO corresponding to the partitions $\xi_{2}$ and $\xi_{4}$ are conjugate each other as well.

Assume that an operator $V:S^2\to S^2$ given by
\begin{eqnarray*}
V:\ \ x'_k=\sum_{i,j=1}^3 P_{ij,k} x_ix_j, \quad k=1,2,3,
\end{eqnarray*}
is a $\xi^{(s)}-$QSO corresponding to the partition
$\xi_{3}=\{\{(1,3)\},\{(1,2),(2,3)\}\}.$ This means that the coefficients $(P_{ij,k})_{i,j,k=1}^3$ of $V$ satisfy the following three conditions:
(i) $\mathbb{P}_{12}\sim \mathbb{P}_{23}$ (ii) $\mathbb{P}_{13}\perp \mathbb{P}_{12}, \ \  \mathbb{P}_{13} \perp \mathbb{P}_{23}$ (iii) $\mathbb{P}_{11}\perp \mathbb{P}_{22}\perp \mathbb{P}_{33}$ where $\mathbb{P}_{ij}=(P_{ij,1},P_{ij,2},P_{ij,3})$.

We consider the following operator $V_{\pi}=\pi^{-1} V \pi,$ where $\pi=
\left(
\begin{array}{ccc}
1 \ \ \ 2  \ \ \ 3\\
2 \ \ \ 1 \ \ \ 3
\end{array}\right).$
It is clear that $V_{\pi}$ is conjugate to $V,$ where
\begin{eqnarray*}
V_{\pi}: \ \ x'_k=\sum_{i,j=1}^3 P_{ij,k}^{\pi} x_i x_j,\  \ k=1,2,3
\end{eqnarray*}
such that $P_{ij,k}^{\pi}=P_{\pi(i)\pi(j),\pi(k)}$ for any $i,j,k=1,2,3$, equivalently, $\mathbb{P}_{ij}^{\pi}=\pi \mathbb{P}_{\pi(i)\pi(j)}$ (in a vector
form) for any $i,j=1,2,3$. Now, we
are going to show that $V_{\pi}$ is a
$\xi^{(s)}$-QSO corresponding to
$\xi_{2}=\{\{(2,3)\},\{(1,2),(1,3)\}\}.$ In order to show it we have
to check three conditions:
\begin{enumerate}
\item[(i)] $\mathbb{P}_{12}^{\pi}\sim
\mathbb{P}_{13}^{\pi}.$ Indeed, since $\mathbb{P}_{12}^{\pi}=\pi
\mathbb{P}_{12},$ $\mathbb{P}_{13}^{\pi}=\pi \mathbb{P}_{23},$ $\mathbb{P}_{12}\sim \mathbb{P}_{23},$ one has
$\mathbb{P}_{12}^{\pi}\sim \mathbb{P}_{13}^{\pi}.$

\item[(ii)] $\mathbb{P}_{12}^{\pi}\perp
\mathbb{P}_{23}^{\pi},\mathbb{P}_{13}^{\pi}\perp
\mathbb{P}_{23}^{\pi}$. Indeed, since $\mathbb{P}_{12}^{\pi}=\pi
\mathbb{P}_{12},\ \mathbb{P}_{23}^{\pi}=\pi \mathbb{P}_{13},$ and
$\mathbb{P}_{12}\perp \mathbb{P}_{13},$ we obtain that
$\mathbb{P}_{12}^{\pi}\perp \mathbb{P}_{23}^{\pi}.$ In the same
manner, we can get that $\mathbb{P}_{13}^{\pi}\perp
\mathbb{P}_{23}^{\pi}.$

\item[(iii)] $\mathbb{P}_{11}^{\pi}\perp \mathbb{P}_{22}^{\pi}\perp
\mathbb{P}_{33}^{\pi}.$ Indeed, since $\mathbb{P}_{11}^{\pi}=\pi
\mathbb{P}_{22}$, $\mathbb{P}_{22}^{\pi}=\pi \mathbb{P}_{11},$ $\mathbb{P}_{33}^{\pi}=\pi \mathbb{P}_{33}$ and
$\mathbb{P}_{11}\perp \mathbb{P}_{22}\perp \mathbb{P}_{33}$, we have that
$\mathbb{P}_{11}^{\pi}\perp \mathbb{P}_{22}^{\pi}\perp
\mathbb{P}_{33}^{\pi}.$
\end{enumerate}

This shows that any $\xi^{(s)}-$QSO taken from the class corresponding to the partition  $\xi_{3}$ is conjugate to some $\xi^{(s)}-$QSO taken from the class corresponding to the partition  $\xi_{2}$. Analogously, we can show that any $\xi^{(s)}-$QSO $V$ taken from the class corresponding to the partition  $\xi_{2}$ is conjugate to a $\xi^{(s)}-$QSO $V_\pi=\pi^{-1} V \pi$ which belongs to the class corresponding to the partition  $\xi_{3}$, where $\pi$ is the same permutation as given above.
This completes the proof.
\end{proof}

Therefore, it is enough to study a class of all $\xi^{(s)}$-QSO corresponding to the partition $\xi_{2}$. Now, we shall consider some sub-class of a class of all $\xi^{(s)}$-QSO corresponding to the partition $\xi_{2}$ by choosing coefficients $(P_{ij,k})_{i,j,k=1}^3$ in special forms:

\begin{center}
\begin{tabular}{||l|c|c|c||} \hline\hline
Case & $\mathbb{P}_{12}$  & $\mathbb{P}_{13}$ & $\mathbb{P}_{23}$\\ \hline
$\mathbf{{{I}}}_{1}$ & $(a,1-a,0)$ & $(a,1-a,0)$ & $(0,0,1)$\\ \hline
$\mathbf{{{I}}}_{2}$ & $(0,a,1-a)$ & $(0,a,1-a)$ & $(1,0,0)$\\ \hline
$\mathbf{{{I}}}_{3}$ & $(a,0,1-a)$ & $(a,0,1-a)$ & $(0,1,0)$\\ \hline
$\mathbf{{{I}}}_{4}$ & $(0,0,1)$ & $(0,0,1)$ & $(a,1-a,0)$\\ \hline
$\mathbf{{{I}}}_{5}$ & $(1,0,0)$ & $(1,0,0)$ & $(0,a,1-a)$\\ \hline
$\mathbf{{{I}}}_{6}$ & $(0,1,0)$ & $(0,1,0)$ & $(a,0,1-a)$\\
\hline\hline
\end{tabular}\\[2mm]
\end{center}
where $a\in[0,1]$ and
\begin{center}
\begin{tabular}{||l|c|c|c||}
\hline\hline
Case & $\mathbb{P}_{11}$ & $\mathbb{P}_{22}$ & $\mathbb{P}_{33}$  \\ \hline
$\mathbf{II}_{1}$ & (1,0,0) & (0,1,0) & (0,0,1) \\ \hline
$\mathbf{II}_{2}$ & (0,1,0) & (1,0,0) & (0,0,1) \\ \hline
$\mathbf{II}_{3}$ & (0,0,1) & (0,1,0) & (1,0,0) \\ \hline
$\mathbf{II}_{4}$ & (1,0,0) & (0,0,1) & (0,1,0) \\ \hline
$\mathbf{II}_{5}$ & (0,0,1) & (1,0,0) & (0,1,0) \\ \hline
$\mathbf{II}_{6}$ & (0,1,0) & (0,0,1) & (1,0,0) \\
\hline\hline
\end{tabular}\\[3mm]
\end{center}

The choices of the cases $\left(\mathbf{{{I}}}_{i},\mathbf{{{II}}}_{j}\right)$, where $i,j=\overline{1,6}$, will give 36 operators from the class of $\xi^{(s)}-$QSO corresponding to the partition $\xi_{2}$.
Finally, we obtain 36 parametric operators which are defined as follows:
\small
\newpage
$$
V_{1}:\left\{
\begin{array}{l}
x_1'=x_1^2+2ax_1(1-x_1)\\
x_2'=x_2^2+2(1-a)x_1(1-x_1)\\
x_3'=x_3^2+2x_2x_3
\end{array} \right.\
V_{2}:\left\{
\begin{array}{l}
x_1'=x_2^2+2ax_1(1-x_1)\\
x_2'=x_1^2+2(1-a)x_1(1-x_1)\\
x_3'=x_3^2+2x_2x_3
\end{array} \right.
$$
\\
$$
V_{3}:\left\{
\begin{array}{l}
x_1'=x_3^2+2ax_1(1-x_1)\\
x_2'=x_2^2+2(1-a)x_1(1-x_1)\\
x_3'=x_1^2+2x_2x_3
\end{array} \right.\
V_{4}:\left\{
\begin{array}{l}
x_1'=x_1^2+2ax_1(1-x_1)\\
x_2'=x_3^2+2(1-a)x_1(1-x_1)\\
x_3'=x_2^2+2x_2x_3
\end{array} \right.\ \
$$
\\
$$
V_{5}:\left\{
\begin{array}{l}
x_1'=x_2^2+2ax_1(1-x_1)\\
x_2'=x_3^2+2(1-a)x_1(1-x_1)\\
x_3'=x_1^2+2x_2x_3
\end{array} \right.\ \
V_{6}:\left\{
\begin{array}{l}
x_1'=x_3^2+2ax_1(1-x_1)\\
x_2'=x_1^2+2(1-a)x_1(1-x_1)\\
x_3'=x_2^2+2x_2x_3
\end{array} \right.
$$
\\
$$
V_{7}:\left\{
\begin{array}{l}
x_1'=x_1^2+2x_2x_3\\
x_2'=x_2^2+2ax_1(1-x_1)\\
x_3'=x_3^2+2(1-a)x_1(1-x_1)
\end{array} \right.\ \
V_{8}:\left\{
\begin{array}{l}
x_1'=x_2^2+2x_2x_3\\
x_2'=x_1^2+2ax_1(1-x_1)\\
x_3'=x_3^2+2(1-a)x_1(1-x_1)
\end{array} \right.\ \
$$
\\
$$
V_{9}:\left\{
\begin{array}{l}
x_1'=x_3^2+2x_2x_3\\
x_2'=x_2^2+2ax_1(1-x_1)\\
x_3'=x_1^2+2(1-a)x_1(1-x_1)
\end{array} \right.\ \
V_{10}:\left\{
\begin{array}{l}
x_1'=x_1^2+2x_2x_3\\
x_2'=x_3^2+2ax_1(1-x_1)\\
x_3'=x_2^2+2(1-a)x_1(1-x_1)
\end{array} \right.\,
$$
\\
$$
V_{11}:\left\{
\begin{array}{l}
x_1'=x_2^2+2x_2x_3\\
x_2'=x_3^2+2ax_1(1-x_1)\\
x_3'=x_1^2+2(1-a)x_1(1-x_1)
\end{array} \right.\;
V_{12}:\left\{
\begin{array}{l}
x_1'=x_3^2+2x_2x_3\\
x_2'=x_1^2+2ax_1(1-x_1)\\
x_3'=x_2^2+2(1-a)x_1(1-x_1)
\end{array} \right.
$$
\\
$$
V_{13}:\left\{
\begin{array}{l}
x_1'=x_1^2+2ax_1(1-x_1)\\
x_2'=x_2^2+2x_2x_3\\
x_3'=x_3^2+2(1-a)x_1(1-x_1)
\end{array} \right.\,
V_{14}:\left\{
\begin{array}{l}
x_1'=x_2^2+2ax_1(1-x_1)\\
x_2'=x_1^2+2x_2x_3\\
x_3'=x_3^2+2(1-a)x_1(1-x_1)
\end{array} \right.\;
$$
\\
$$
V_{15}:\left\{
\begin{array}{l}
x_1'=x_3^2+2ax_1(1-x_1)\\
x_2'=x_2^2+2x_2x_3\\
x_3'=x_1^2+2(1-a)x_1(1-x_1)
\end{array} \right.\ \
V_{16}:\left\{
\begin{array}{l}
x_1'=x_1^2+2ax_1(1-x_1)\\
x_2'=x_3^2+2x_2x_3\\
x_3'=x_2^2+2(1-a)x_1(1-x_1)
\end{array} \right.\,
$$
\\
$$
V_{17}:\left\{
\begin{array}{l}
x_1'=x_2^2+2ax_1(1-x_1)\\
x_2'=x_3^2+2x_2x_3\\
x_3'=x_1^2+2(1-a)x(1-x)
\end{array} \right.\;
V_{18}:\left\{
\begin{array}{l}
x_1'=x_3^2+2ax_1(1-x_1)\\
x_2'=x_1^2+2x_2x_3\\
x_3'=x_2^2+2(1-a)x_1(1-x_1)
\end{array} \right.
$$
\\
$$
V_{19}:\left\{
\begin{array}{l}
x_1'=x_1^2+2ax_2x_3\\
x_2'=x_2^2+2(1-a)x_2x_3\\
x_3'=x_3^2+2x_1(1-x_1)
\end{array} \right.\,
V_{20}    :\left\{
\begin{array}{l}
x_1'=x_2^2+2ax_2x_3\\
x_2'=x_1^2+2(1-a)x_2x_3\\
x_3'=x_3^2+2x_1(1-x_1)
\end{array} \right.\;
$$
\\
$$
V_{21}:\left\{
\begin{array}{l}
x_1'=x_3^2+2ax_2x_3\\
x_2'=x_2^2+2(1-a)x_2x_3\\
x_3'=x_1^2+2x_1(1-x_1)
\end{array} \right.\ \
V_{22}:\left\{
\begin{array}{l}
x_1'=x_1^2+2ax_2x_3\\
x_2'=x_3^2+2(1-a)x_2x_3\\
x_3'=x_2^2+2x_1(1-x_1)
\end{array} \right.\,
$$
\\
$$
V_{23}:\left\{
\begin{array}{l}
x_1'=x_2^2+2ax_2x_3\\
x_2'=x_3^2+2(1-a)x_2x_3\\
x_3'=x_1^2+2x_1(1-x_1)
\end{array} \right.\;
V_{24}:\left\{
\begin{array}{l}
x_1'=x_3^2+2ax_2x_3\\
x_2'=x_1^2+2(1-a)x_2x_3\\
x_3'=x_2^2+2x_1(1-x_1)
\end{array} \right.
$$
\\
$$
V_{25}:\left\{
\begin{array}{l}
x_1'=x_1^2+2x_1(1-x_1)\\
x_2'=x_2^2+2ax_2x_3\\
x_3'=x_3^2+2(1-a)x_2x_3
\end{array} \right.\,
V_{26}:\left\{
\begin{array}{l}
x_1'=x_2^2+2x_1(1-x_1)\\
x_2'=x_1^2+2ax_2x_3\\
x_3'=x_3^2+2(1-a)x_2x_3
\end{array} \right.\;
$$
\\
$$
V_{27}:\left\{
\begin{array}{l}
x_1'=x_3^2+2x_1(1-x_1)\\
x_2'=x_2^2+2ax_2x_3\\
x_3'=x_1^2+2(1-a)x_2x_3
\end{array} \right.\ \
V_{28}:\left\{
\begin{array}{l}
x_1'=x_1^2+2x_1(1-x_1)\\
x_2'=x_3^2+2ax_2x_3\\
x_3'=x_2^2+2(1-a)x_2x_3
\end{array} \right.\,
$$
\\
$$
V_{29}:\left\{
\begin{array}{l}
x_1'=x_2^2+2x_1(1-x_1)\\
x_2'=x_3^2+2ax_2x_3\\
x_3'=x_1^2+2(1-a)x_2x_3
\end{array} \right.\;
V_{30}:\left\{
\begin{array}{l}
x_1'=x_3^2+2x_1(1-x_1)\\
x_2'=x_1^2+2ax_2x_3\\
x_3'=x_2^2+2(1-a)x_2x_3
\end{array} \right.
$$
\\
$$
V_{31}:\left\{
\begin{array}{l}
x_1'=x_1^2+2ax_2x_3\\
x_2'=x_2^2+2x_1(1-x_1)\\
x_3'=x_3^2+2(1-a)x_2x_3
\end{array} \right.\,
V_{32}:\left\{
\begin{array}{l}
x_1'=x_2^2+2ax_2x_3\\
x_2'=x_1^2+2x_1(1-x_1)\\
x_3'=x_3^2+2(1-a)x_2x_3
\end{array} \right.\;
$$
\\
$$
V_{33}:\left\{
\begin{array}{l}
x_1'=x_3^2+2ax_2x_3\\
x_2'=x_2^2+2x_1(1-x_1)\\
x_3'=x_1^2+2(1-a)x_2x_3
\end{array} \right.\ \
V_{34}:\left\{
\begin{array}{l}
x_1'=x_1^2+2ax_2x_3\\
x_2'=x_3^2+2x_1(1-x_1)\\
x_3'=x_2^2+2(1-a)x_2x_3
\end{array} \right.\,
$$
\\
$$
V_{35}:\left\{
\begin{array}{l}
x_1'=x_2^2+2ax_2x_3\\
x_2'=x_3^2+2x_1(1-x_1)\\
x_3'=x_1^2+2(1-a)x_2x_3
\end{array} \right.\;
V_{36}:\left\{
\begin{array}{l}
x_1'=x_3^2+2ax_2x_3\\
x_2'=x_1^2+2x_1(1-x_1)\\
x_3'=x_2^2+2(1-a)x_2x_3
\end{array} \right.
$$

\normalsize

\begin{thm}\label{22}
All 36 operators from the class of $\xi^{(s)}-$QSO corresponding to the partition $\xi_{2}$ defined as above are classified into 20 non-conjugate classes:
$$
\begin{array}{llll}
K_{1}=\{V_{1},V_{13}\}, &  K_{2}=\{V_{2},V_{15}\}, & K_{3}=\{V_{3},V_{14}\}, & K_{4}=\{V_{4},V_{16}\},\\
K_{5}=\{V_{5},V_{18}\}, & K_{6}=\{V_{6},V_{17}\}, &  K_{7}=\{V_{7}\},& K_{8}=\{V_{8},V_{9}\},\\
K_{9}=\{V_{10}\}, & K_{10}=\{V_{11}, \ \ V_{12}\}, & K_{11}=\{V_{19},V_{31}\}, & K_{12}=\{V_{20},V_{33}\},\\
K_{13}=\{V_{21},V_{32}\},  & K_{14}=\{V_{22},V_{34}\}, &K_{15}=\{V_{23},V_{36}\}, &K_{16}=\{V_{24},V_{35}\},\\
K_{17}=\{V_{25}\}, & K_{18}=\{V_{26},V_{27}\}, & K_{19}=\{V_{28}\},
&K_{20}=\{V_{29},V_{30}\}.
\end{array}
$$
\end{thm}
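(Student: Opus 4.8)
The plan is to exploit the transformation rule from the proof of Proposition~\ref{iz0} together with a short rigidity observation, after which the classification reduces to a finite orbit computation.

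First, I would make explicit the correspondence between the list $V_{1},\dots,V_{36}$ and the pairs $(\mathbf{I}_{i},\mathbf{II}_{j})$: reading the coefficients $P_{ij,k}$ off the displayed formulas one checks that $V_{6(i-1)+j}$ is exactly the operator whose off-diagonal data $(\mathbb{P}_{12},\mathbb{P}_{13},\mathbb{P}_{23})$ comes from row $\mathbf{I}_{i}$ and whose diagonal data $(\mathbb{P}_{11},\mathbb{P}_{22},\mathbb{P}_{33})$ comes from row $\mathbf{II}_{j}$. Recall from the proof of Proposition~\ref{iz0} that for a permutation $\pi$ of $\{1,2,3\}$ the conjugate $V_{\pi}=\pi^{-1}V\pi$ has coefficient vectors $\mathbb{P}^{\pi}_{ij}=\pi\,\mathbb{P}_{\pi(i)\pi(j)}$; in particular $\pi$ permutes the off-diagonal triple $\{\mathbb{P}_{12},\mathbb{P}_{13},\mathbb{P}_{23}\}$ among itself and the diagonal triple $\{\mathbb{P}_{11},\mathbb{P}_{22},\mathbb{P}_{33}\}$ among itself, independently of one another.

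Second comes the rigidity step, which pins down the only permutations that matter. In each row $\mathbf{I}_{i}$ one has the exact equality $\mathbb{P}_{12}=\mathbb{P}_{13}$ while $\mathbb{P}_{23}\neq\mathbb{P}_{12}$ (one of these two vectors is a vertex $e_{s}$ of $S^{2}$ and the other lies on the opposite edge, hence has a zero in coordinate $s$). Consequently, if the conjugate $V_{\pi}$ is again one of the $36$ listed operators then $\mathbb{P}^{\pi}_{12}=\mathbb{P}^{\pi}_{13}$, i.e. $\mathbb{P}_{\pi(1)\pi(2)}=\mathbb{P}_{\pi(1)\pi(3)}$; since the unique pair of distinct coupled indices carrying equal $\mathbb{P}$-vectors is $\{(1,2),(1,3)\}$, the two pairs $\{\pi(1),\pi(2)\}$ and $\{\pi(1),\pi(3)\}$ must be precisely $(1,2)$ and $(1,3)$, and comparing their common elements forces $\pi(1)=1$. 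Hence the only permutations that carry a listed operator to a listed operator are the identity and the transposition $\pi_{0}=(2\,3)$, so ``conjugate'' on the $36$ operators is exactly the orbit equivalence of the $\langle\pi_{0}\rangle\cong\mathbb{Z}_{2}$-action, and no conjugacy can be overlooked.

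It then remains to compute this action. Since $\pi_{0}$ respects the off-diagonal/diagonal split, it induces a permutation $m_{\mathrm{I}}$ of $\{\mathbf{I}_{1},\dots,\mathbf{I}_{6}\}$ and a permutation $m_{\mathrm{II}}$ of $\{\mathbf{II}_{1},\dots,\mathbf{II}_{6}\}$, and $V_{6(i-1)+j}$ is $\pi_{0}$-conjugate to $V_{6(m_{\mathrm{I}}(i)-1)+m_{\mathrm{II}}(j)}$, up to possibly replacing $a$ by $1-a$. Applying $\mathbb{P}^{\pi_{0}}_{ij}=\pi_{0}\,\mathbb{P}_{\pi_{0}(i)\pi_{0}(j)}$ to each row (twelve short vector computations) gives $m_{\mathrm{I}}:\mathbf{I}_{1}\leftrightarrow\mathbf{I}_{3},\ \mathbf{I}_{4}\leftrightarrow\mathbf{I}_{6},\ \mathbf{I}_{2}\ \text{and}\ \mathbf{I}_{5}\ \text{fixed}$, and $m_{\mathrm{II}}:\mathbf{II}_{2}\leftrightarrow\mathbf{II}_{3},\ \mathbf{II}_{5}\leftrightarrow\mathbf{II}_{6},\ \mathbf{II}_{1}\ \text{and}\ \mathbf{II}_{4}\ \text{fixed}$ (for the latter one may identify $\mathbf{II}_{j}$ with the permutation $\sigma_{j}$ defined by $\mathbb{P}_{ii}=e_{\sigma_{j}(i)}$, and then $m_{\mathrm{II}}$ is simply $\sigma\mapsto\pi_{0}\sigma\pi_{0}$). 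Running $(i,j)\mapsto(m_{\mathrm{I}}(i),m_{\mathrm{II}}(j))$ over all $36$ pairs produces $16$ two-element orbits and $4$ fixed points, and direct inspection identifies these with $K_{1},\dots,K_{20}$ (for instance $(\mathbf{I}_{1},\mathbf{II}_{1})\mapsto(\mathbf{I}_{3},\mathbf{II}_{1})$ gives $K_{1}=\{V_{1},V_{13}\}$, while $(\mathbf{I}_{2},\mathbf{II}_{1})$ and $(\mathbf{I}_{5},\mathbf{II}_{4})$ are fixed, giving the singletons $K_{7}=\{V_{7}\}$ and $K_{19}=\{V_{28}\}$). The only genuine obstacle is organizational, namely getting the bijection $V_{k}\leftrightarrow(\mathbf{I}_{i},\mathbf{II}_{j})$ and the twelve small vector computations right; the rigidity step is precisely what guarantees that the resulting orbit list is a complete and faithful list of conjugacy classes.
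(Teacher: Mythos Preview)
Your proposal is correct and follows essentially the same approach as the paper: the paper's proof simply observes that the partition $\xi_{2}$ is invariant only under the transposition $\pi_{0}=(2\,3)$ and declares the remaining orbit computation ``straightforward,'' while you supply the details by giving a careful rigidity argument (the equality $\mathbb{P}_{12}=\mathbb{P}_{13}$ forcing $\pi(1)=1$) and then explicitly working out the induced involutions $m_{\mathrm{I}}$, $m_{\mathrm{II}}$ and their $20$ orbits. Your write-up is thus a fleshed-out version of exactly what the paper sketches.
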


\begin{proof}  It is easy to check that the partition $\xi_{2}=\{\{(2,3)\},\{(1,2),(1,3)\}\}$ is invariant under only one permutation $\pi= \left(
\begin{array}{ccc}
1 \ \ \ 2  \ \ \ 3\\
1 \ \ \ 3 \ \ \ 2
\end{array}
\right)$. The proof of the theorem can be easily insured with respect to this permutation. It is straightforward.
\end{proof}

The main problem is to investigate the dynamics of these classes of operators. In
what follows, we are going to study three classes $K_1, K_4, K_{19}$. From the list, one can conclude that theses three classes of operators are either $\ell$-Volterra-QSO or permuted $\ell$-Volterra-QSO. The class $K_{17}$ was already studied in \cite{12,121}. The rest classes of operators would be studied in elsewhere in the future.

\section{Dynamics of $\xi^{(s)}$-QSO from the class $K_{1}$}

In this section we are going to study dynamic of $\xi^{(s)}$-QSO
from the class $K_{1}$.

We need some auxiliary facts about properties of the function $f_a:[0,1]\to[0,1]$ given by
\begin{equation}\label{fa}
f_a(x)=x^2+2ax(1-x),
\end{equation}
where $a\in [0,1].$ If $a=\frac{1}{2}$, then the function becomes
the identity mapping. Therefore, we shall consider only the case $a\neq
\frac{1}{2}$.

\begin{prop}\label{iz1}
Let $f_a:[0,1]\to[0,1]$ be a function given by \eqref{fa} where
$a\neq\frac{1}{2}.$ Then the following statements hold true:
\begin{enumerate}
\item[(i)] One has that $Fix(f_a)=\{0,1\};$
\item[(ii)] The function $f_a$ is increasing;
\item[(iii)] One has that $(a-\frac{1}{2})(f_a(x)-x)>0$ for any $x\in(0,1);$
\item[(iv)] One has that
$
\omega_{f_a}(x_0)=\left\{
\begin{array}{l}
\{0\} \ \ \text{if}   \ \ 0\leq a<\frac{1}{2}\\
\{1\} \ \ \text{if}   \ \ \frac{1}{2}< a\leq1
\end{array}
\right.
$ for any
$x_0\in(0,1).$
\end{enumerate}
\end{prop}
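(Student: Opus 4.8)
The proof plan is to analyze the quadratic polynomial $f_a(x) = x^2 + 2ax(1-x)$ directly, exploiting the fact that it is a one-dimensional map. First I would rewrite $f_a$ in the convenient form $f_a(x) = (1-2a)x^2 + 2ax$, so that $f_a(x) - x = (1-2a)x^2 + (2a-1)x = (1-2a)x(x-1) = -(2a-1)x(x-1)$. From this single identity, parts (i) and (iii) follow immediately: $f_a(x) = x$ iff $x(x-1) = 0$ (since $a \neq \tfrac12$), giving $Fix(f_a) = \{0,1\}$; and for $x \in (0,1)$ we have $x(x-1) < 0$, so $f_a(x) - x = -(2a-1)x(x-1)$ has the same sign as $(2a-1) = 2(a-\tfrac12)$, which is exactly the claim $(a-\tfrac12)(f_a(x)-x) > 0$.

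Next, for part (ii), I would compute $f_a'(x) = 2(1-2a)x + 2a$, which is linear in $x$; on $[0,1]$ it ranges between $f_a'(0) = 2a \geq 0$ and $f_a'(1) = 2(1-a) \geq 0$, and since a linear function attains its extrema at the endpoints, $f_a'(x) \geq 0$ throughout $[0,1]$, so $f_a$ is increasing (nondecreasing; strictly increasing on the interior unless $a \in \{0,1\}$, and one should note the endpoint-degenerate cases $a=0$ give $f_a(x)=x^2$ and $a=1$ give $f_a(x) = 2x - x^2$, both increasing on $[0,1]$). I should also check $f_a$ indeed maps $[0,1]$ into $[0,1]$: since $f_a$ is increasing with $f_a(0)=0$ and $f_a(1)=1$, the range on $[0,1]$ is contained in $[0,1]$.

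For part (iv), I would combine monotonicity with the sign information from (iii) via the standard argument for monotone interval maps. Fix $x_0 \in (0,1)$. Since $f_a$ is increasing and maps $(0,1)$ into itself (as $0,1$ are fixed and there are no other fixed points), the orbit $\{f_a^{(n)}(x_0)\}$ is monotone: when $0 \leq a < \tfrac12$, part (iii) gives $f_a(x) < x$ on $(0,1)$, so the orbit is strictly decreasing and bounded below by $0$, hence converges to some limit $L \in [0,1)$ which must be a fixed point, forcing $L = 0$; when $\tfrac12 < a \leq 1$, part (iii) gives $f_a(x) > x$ on $(0,1)$, so the orbit is strictly increasing and bounded above by $1$, converging to a fixed point $L \in (0,1]$, forcing $L = 1$. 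In either case $\omega_{f_a}(x_0)$ is the singleton $\{0\}$ or $\{1\}$ respectively. The only mild subtlety — and the one place to be careful — is justifying that a monotone bounded orbit converges to a \emph{fixed} point (continuity of $f_a$ plus passing to the limit in $f_a^{(n+1)}(x_0) = f_a(f_a^{(n)}(x_0))$) and that the orbit stays in the open interval $(0,1)$ so that (iii) applies at every step; both are routine given (i) and (ii). No genuine obstacle is expected here — the entire proposition reduces to the one factorization $f_a(x) - x = -(2a-1)x(x-1)$ together with elementary monotone-convergence reasoning.
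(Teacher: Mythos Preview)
Your proposal is correct and follows essentially the same route as the paper: the paper also uses the factorization $f_a(x)-x=(1-2a)(x^2-x)$ for parts (i) and (iii), computes $f_a'(x)=2x(1-a)+2a(1-x)\geq 0$ for (ii), and for (iv) argues that the orbit is monotone and bounded, hence converges to a fixed point. Your write-up is slightly more careful (checking $f_a([0,1])\subset[0,1]$ and that the orbit stays in $(0,1)$), but the ideas are identical.
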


\begin{proof} Let $f_a:[0,1]\to[0,1]$ be a function given by \eqref{fa} where
$a\neq\frac{1}{2}.$
\begin{enumerate}
\item[(i)] In order to find fixed points of the function $f_a$, we should solve the following equation $x^2+2ax(1-x)=x$.
It follows from the last equation that
$(1-2a)(x^2-x)=0$.
Since $a\neq\frac{1}{2},$ we get that $x=0$ or $x=1.$ Therefore, $Fix(f_a)=\{0,1\}.$
\item[(ii)] Since $f_a'(x)=2x(1-a)+2a(1-x)\geq0,$ the function $f_a$ is increasing.
\item[(iii)] Since $f_a(x)-x=(1-2a)(x^2-x),$ we may get that $(a-\frac{1}{2})(f_a(x)-x)>0$.
\item[(iv)] Let $0\leq a<\frac{1}{2}$ and $x_0\in(0,1).$ Due to (iii), we have that $f_a(x_0)<x_0.$ Since $f_a$ is increasing, we obtain that $f_a^{(n+1)}(x_0)<f_a^{(n)}(x_0)$ for any $n\in \mathbb{N}$. This means that $\left\{f_a^{(n)}(x_0)\right\}_{n=1}^\infty$ is a bounded decreasing sequence. Consequently, it converges to some point $x^{*}$ and $x^{*}$ should be a fixed point, i.e., $x^{*}=0.$ This means that $w_{f_a}(x_0)=\{0\}.$ Similarly, one can show that if $\frac{1}{2}< a\leq1$ then
$\omega_{f_a}(x_0)=\{1\}$ for any $x_0\in(0,1).$ This completes the proof.
\end{enumerate}
\end{proof}

Now, we are going to study dynamics of a $\xi^{(s)}$-QSO $V_{13}:S^2\to S^2$ taken from $K_{1}$:
\begin{equation}\label{op1}
V_{13}:\left\{
\begin{array}{l}
x'_1=x_1^2+2x_1a(1-x_1)\\
x'_2=x_2^2+2x_2x_3\\
x'_3=x_3^2+2(1-a)x_1(1-x_1)
\end{array} \right.
\end{equation}
where $0\leq a\leq 1$. This operator is an $\ell$-Volterra-QSO and its behavior was not studied in
\cite{23,231}.

Let $e_1,e_2,e_3$ be the vertices of the simplex $S^2$ and $\Gamma_i=\left\{x\in S^{2}: x_i=0\right\}$ be an edge of the simplex $S^2$, where $i=1,2,3$. Let $S_{1\leq3}=\left\{x\in S^2: x_1\leq x_3\right\}$, $S_{1\geq3}=\left\{x\in S^2: x_1\geq x_3\right\}$, and $l_{13}=\left\{x\in S^2: x_1=x_3\right\}$.

\begin{thm}\label{22} Let $V_{13}:S^2\rightarrow S^2$ be a
 $\xi^{(s)}$-QSO given by \eqref{op1} and $x^{(0)}=\left( x_1^{(0)},x_2^{(0)},x_3^{(0)}\right) \\ \notin Fix(V_{13}) $ be an initial
point. Then the following statements hold true:
\begin{enumerate}
\item[(i)] One has that
$$
Fix(V_{13})=
\begin{cases}
\{e_1,e_2,e_3\} & \text{if} \quad a\neq\frac{1}{2}\\
\Gamma_2\bigcup l_{13} & \text {if} \quad a=\frac{1}{2}
\end{cases}
$$
\item[(ii)] If $0\leq a<\frac {1} {2}$ then
$$
\omega_{V_{13}}(x^{(0)})=
\begin{cases}
\{e_2\} & \text{if} \quad  x_2^{(0)}\neq 0 \\
\{e_3\} & \text{if} \quad  x_2^{(0)}=0
\end{cases}
$$
\item[(iii)] If $\frac {1} {2}<a\leq1$ then
$$
\omega_{V_{13}}(x^{(0)})=
\begin{cases}
\{e_1\} & \text{if} \quad x_1^{(0)}\neq 0 \\
\{e_2\} & \text{if} \quad x_1^{(0)}=0
\end{cases}
$$
\item[(iv)] If $ a=\frac{1}{2}$ then
$$
\omega_{V_{13}}(x^{(0)})=
\begin{cases}
\left\{\left(x_1^{(0)},0,1-x_1^{(0)}\right)\right\} & \text{if} \quad x_1^{(0)}>\frac{1}{2} \\
\left\{\left(x_1^{(0)},1-2x_1^{(0)},x_1^{(0)}\right)\right\} & \text{if} \quad x_1^{(0)}\leq\frac{1}{2}
\end{cases}
$$

\end{enumerate}
\end{thm}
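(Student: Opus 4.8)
The plan is to exploit the structure of $V_{13}$: the first coordinate evolves autonomously via $x_1' = f_a(x_1)$, and the third coordinate is slaved to the first, while $x_2$ behaves in a complementary way. First I would record the consequences of Proposition \ref{iz1}. For (i), finding $Fix(V_{13})$ is a direct computation: $x_1 = f_a(x_1)$ forces $x_1 \in \{0,1\}$ when $a\neq\frac12$ by Proposition \ref{iz1}(i), and then the remaining two equations $x_2^2 + 2x_2x_3 = x_2$ and $x_3^2 + 2(1-a)x_1(1-x_1) = x_3$ pin down the fixed points to the three vertices; when $a = \frac12$ the equation $x_1' = x_1$ is vacuous, the third equation reduces to $x_3^2 = x_3$ so $x_3\in\{0,1\}$, and $x_2' = x_2$ reduces to $2x_2x_3 = 0$ — unwinding the cases on $S^2$ yields exactly $\Gamma_2 \cup l_{13}$. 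This step is routine bookkeeping.

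For (ii) and (iii), the key observation is that along any trajectory the first coordinate satisfies $x_1^{(n+1)} = f_a\!\left(x_1^{(n)}\right)$, so by Proposition \ref{iz1}(iv), $x_1^{(n)} \to 0$ if $a < \frac12$ and $x_1^{(n)} \to 1$ if $a > \frac12$ (when $x_1^{(0)} \in (0,1)$; the boundary cases $x_1^{(0)} \in \{0,1\}$ are handled separately and are easy). Consider the case $\frac12 < a \le 1$ with $x_1^{(0)} \neq 0$. Then $x_1^{(n)} \to 1$, hence $x_2^{(n)} + x_3^{(n)} = 1 - x_1^{(n)} \to 0$, which forces $x_2^{(n)} \to 0$ and $x_3^{(n)} \to 0$, i.e. $x^{(n)} \to e_1$; so $\omega_{V_{13}}(x^{(0)}) = \{e_1\}$. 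If instead $x_1^{(0)} = 0$, then $x_1^{(n)} \equiv 0$, so the dynamics collapse onto the edge $\Gamma_1$, where the operator reads $x_2' = x_2^2 + 2x_2x_3 = x_2(x_2 + 2x_3) = x_2(2 - x_2)$ (using $x_2 + x_3 = 1$) and $x_3' = x_3^2$. Since $x_3' = x_3^2$ and $x_3^{(0)} \in [0,1)$ (as $x^{(0)} \notin Fix$), we get $x_3^{(n)} \to 0$, hence $x_2^{(n)} \to 1$, i.e. convergence to $e_2$. The case $0 \le a < \frac12$ is symmetric with the roles adapted: $x_1^{(n)} \to 0$, so the trajectory approaches the edge $\Gamma_1$, and on $\Gamma_1$ we still have $x_3' = x_3^2 \to 0$ and $x_2 \to 1$ provided $x_2^{(0)} \neq 0$; if $x_2^{(0)} = 0$ then $x_2^{(n)} \equiv 0$ and the trajectory lies in $\Gamma_2$, where $x_3' = x_3^2 + 2(1-a)x_1(1-x_1)$ and $x_1' = f_a(x_1)$ with $x_1^{(n)} \to 0$ forces $x_3^{(n)} \to 1$, giving $e_3$. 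I would also need to double-check that when $x_2^{(0)} = 0$ the whole orbit stays in $\Gamma_2$, which is immediate since $x_2 = 0 \Rightarrow x_2' = 0$.

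For (iv), the case $a = \frac12$: here $f_{1/2}$ is the identity, so $x_1^{(n)} \equiv x_1^{(0)} =: c$ for all $n$, and the operator restricted to the invariant segment $\{x_1 = c\}$ becomes a one-dimensional map. Writing $x_3' = x_3^2 + (1-2\cdot\frac12)\cdot\text{(}\ldots\text{)}$ — more precisely $x_3' = x_3^2 + x_1(1-x_1) = x_3^2 + c(1-c)$, and $x_2' = 1 - c - x_3'$. Parametrize the segment by $x_3 \in [0, 1-c]$ and study $g(t) = t^2 + c(1-c)$. Its fixed points solve $t^2 - t + c(1-c) = 0$, i.e. $t = c$ or $t = 1-c$. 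If $c \le \frac12$ then $1 - c \ge c$, the segment is $[0,1-c]$, and $g$ has both $c$ and $1-c$ as fixed points; monotonicity of $g$ on $[0,1-c]$ together with sign analysis of $g(t) - t = (t-c)(t-(1-c))$ shows every $t \in [0,1-c]$ converges to... here I must be careful, because $x^{(0)} \notin Fix(V_{13})$ and $Fix = \Gamma_2 \cup l_{13}$, so $x_3^{(0)} \notin \{0, c\}$ is not quite what is excluded — rather $x^{(0)} \notin \Gamma_2 \cup l_{13}$ means $x_2^{(0)} \neq 0$ and $x_1^{(0)} \neq x_3^{(0)}$, i.e. $x_3^{(0)} \neq c$ and $x_3^{(0)} \neq 1 - c$ (the latter because $x_3 = 1-c$ with $x_1 = c$ forces $x_2 = 0$). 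So $x_3^{(0)} \in (0, 1-c) \setminus \{c\}$, wait — if $c < \frac 12$ then $c \in (0,1-c)$, and $g(t) - t = (t-c)(t-(1-c)) < 0$ for $t \in (c, 1-c)$ but $> 0$ for $t \in (0,c)$, so orbits starting in $(0,c)$ decrease to $0$ while those in $(c,1-c)$ decrease to $c$ — that would contradict the claimed single answer. I expect the resolution is that on the segment the relevant coordinate is $x_2$, not $x_3$: one should track $x_2^{(n)}$, and since $x_2^{(0)} \neq 0$, a sign/monotonicity argument on $x_2$ shows $x_2^{(n)}$ converges monotonically, with limit determined by whether $c > \frac12$ or $c \le \frac12$; when $c > \frac12$ the segment forces $x_3 < 1 - c < c$ region and the limit is $x_3 = 0$ (so $\omega = \{(c,0,1-c)\}$, matching), and when $c \le \frac12$ the limit of $x_2^{(n)}$ is $0$ forcing $x_3 \to 1-c$ — but the claim says $\omega = \{(c, 1-2c, c)\}$, i.e. $x_3 \to c$.

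Reconciling this is the main obstacle: I would need to carefully recompute the one-dimensional map on $\{x_1 = c\}$, taking as the dynamical variable $x_2$ with $x_2' = x_2^2 + 2x_2 x_3 = x_2(x_2 + 2x_3) = x_2(2 - 2x_1 - x_2) = x_2(2(1-c) - x_2)$, a logistic-type map $h(s) = s(2(1-c) - s)$ on $[0, 1-c]$. Its fixed points are $s = 0$ and $s = 1 - 2c$ (the latter only lies in $[0,1-c]$ when $c \le \frac12$). For $c \le \frac12$: $h(s) - s = s(1 - 2c - s)$, which is positive for $s \in (0, 1-2c)$ and negative for $s \in (1-2c, 1-c)$; so $x_2^{(n)}$ increases to $1-2c$ from below and decreases to $1-2c$ from above, hence $x_2^{(n)} \to 1-2c$ for every $x_2^{(0)} \in (0, 1-c)$, giving $x_3 \to (1-c) - (1-2c) = c$, i.e. $\omega_{V_{13}}(x^{(0)}) = \{(c, 1-2c, c)\}$ as claimed. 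One must also verify $h$ maps $[0,1-c]$ into itself (its max is $(1-c)^2 \le 1-c$) and is eventually monotone on the relevant sub-intervals, or argue directly via the sign of $h(s) - s$ plus boundedness. For $c > \frac12$: then $1 - 2c < 0$, so $h(s) - s = s(1-2c-s) < 0$ for all $s \in (0, 1-c]$, hence $x_2^{(n)}$ strictly decreases to its only nonnegative fixed point $0$, so $x_3^{(n)} \to 1 - c$ and $\omega = \{(c, 0, 1-c)\}$. This clean logistic reduction is the crux; everything else is straightforward once the first coordinate is recognized as autonomous. I would present (iv) last, emphasizing that the exclusion $x^{(0)} \notin Fix(V_{13})$ in the case $a=\frac12$ precisely removes the fixed segment $l_{13} \cup \Gamma_2$ and that $x_2^{(0)} \neq 0$ is exactly what guarantees the logistic orbit is non-stationary.
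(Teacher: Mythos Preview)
Your treatment of (iii) is correct and matches the paper. Your approach to (iv), reducing the dynamics on the invariant slice $\{x_1=c\}$ to the one-dimensional logistic-type map $h(s)=s\bigl(2(1-c)-s\bigr)$ in the variable $s=x_2$, is correct and in fact cleaner than the paper's argument, which instead establishes monotonicity of $x_2^{(n)}$ via the invariance of the regions $S_{1\le3}$ and $S_{1\ge3}$ and then identifies the limit among the fixed points. Both reach the same conclusion, but your logistic reduction makes the case split at $c=\tfrac12$ transparent.

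There are, however, two genuine problems elsewhere. In (i) for $a=\tfrac12$ your reductions are wrong: the third fixed-point equation is $x_3=x_3^2+x_1(1-x_1)$, which does \emph{not} simplify to $x_3=x_3^2$, and the second equation gives $x_2(1-x_2-2x_3)=0$, i.e.\ (using $x_1+x_2+x_3=1$) $x_2=0$ or $x_1=x_3$, not $2x_2x_3=0$. Your stated equations would actually produce $\Gamma_3\cup\{e_3\}$ rather than $\Gamma_2\cup l_{13}$. The correct route is exactly the one just indicated: the second equation forces $x_2=0$ or $x_1=x_3$, and in either case the third equation is automatically satisfied.

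More seriously, your argument for (ii) when $x_2^{(0)}\neq 0$ has a real gap. From $x_1^{(n)}\to 0$ you may conclude $\omega_{V_{13}}(x^{(0)})\subset\Gamma_1$, but you may \emph{not} then simply invoke the $\Gamma_1$-dynamics ``$x_3'=x_3^2$'' for the actual orbit, which never lies on $\Gamma_1$: the true recursion is $x_3^{(n+1)}=(x_3^{(n)})^2+2(1-a)x_1^{(n)}(1-x_1^{(n)})$, with a positive (though vanishing) perturbation, and this does not by itself force $x_3^{(n)}\to 0$. The paper supplies the missing mechanism with a nontrivial Claim: the region $S_{1\le3}=\{x_1\le x_3\}$ is forward invariant under $V_{13}$, and every non-fixed orbit eventually enters it (argued by contradiction, using the identity $x_2^{(n+1)}=x_2^{(n)}\bigl(1+x_3^{(n)}-x_1^{(n)}\bigr)$). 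Once the orbit is in $S_{1\le3}$, that same identity shows $\{x_2^{(n)}\}$ is increasing, hence convergent; the limit is then a fixed point on $\Gamma_1$ with positive second coordinate, forcing it to be $e_2$. Some argument of this type is required---asymptotic autonomy of the limiting system is not sufficient on its own.
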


\begin{proof} Let $V_{13}:S^2\rightarrow S^2$ be a
 $\xi^{(s)}$-QSO given by \eqref{op1}, $x^{(0)}=\left( x_1^{(0)},x_2^{(0)},x_3^{(0)}\right) \notin Fix(V_{13})$ be an initial
point, and $\{x^{(n)}\}_{n=0}^\infty$ be a trajectory of $V_{13}$ starting from the point $x^{(0)}$.
\begin{enumerate}
\item[(i)] In order to find fixed points of \eqref{op1}, we should solve the following system of equations:
\begin{equation}\label{op12}
\left\{
\begin{array}{l}
x_1$=$x_1^2+2x_1a(1-x_1)\\
x_2$=$x_2^2+2x_2x_3\\
x_3$=$x_3^2+2(1-a)x_1(1-x_1)
\end{array} \right.
\end{equation}
We shall separately consider two cases $a=\frac{1}{2}$ and $a\neq\frac{1}{2}
$.

Let $a\neq \frac{1}{2}.$ From the first equation of \eqref{op12}, we
get that $x_1=0$ and $x_1=1$ (see Proposition \ref{iz1} (i)). It follows from the second equation of \eqref{op12} that if
$x_1=0$ then $x_2=0, x_3 =1$ or $x_2=1, x_3=0$  and if
$x_1=1$ then $x_2=x_3=0$. This means that
$Fix(V_{13})=\{e_1,e_2,e_3\}$.

 Let $a=\frac{1}{2}.$ The first equation of \eqref{op12} takes the form $x_1=x_1$.
 From the second equation of \eqref{op12}, we get that $x_2(x_1-x_3)=0.$ This yields that $x_2=0$ or $x_1=x_3.$ In both cases, the third equation of \eqref{op12} holds true. Therefore, we have that $Fix(V_{13})=\Gamma_2\bigcup l_{13}$.

\item[(ii)] Let $0\leq a<\frac{1}{2}$.  It is clear that $x_1^{(n)}=f_a^{(n)}(x^{(0)}_1).$ Therefore, due to
Proposition \ref{iz1} (iv), we have that
$\lim\limits_{n\to\infty}x_1^{(n)}=0.$ Hence, $\omega_{V_{13}}(x^{(0)})\subset \Gamma_{1}=\{x\in S^{2}: x_1=0\}.$ Now, we shall separately consider two cases $x_2^{(0)}=0$ and $x_2^{(0)}\neq 0.$

Let $x_2^{(0)}=0.$ In this case $x_2^{(n)}=0$ and $\lim\limits_{n\to\infty}x_3^{(n)}=1.$
Hence,  $\omega_{V_{13}}(x^{(0)})=\{e_3\}$.

Let $x_2^{(0)}\neq 0$. We need the following result.

\textbf{Claim.}
One has that $V_{13}\left({S}_{1\leq3}\right)\subset{S}_{1\leq3}$. Moreover, for any $x^{(0)}\notin Fix(V_{13})$ there exists $n_0$ (depending on $x^{(0)}$) such that $\{x^{(n)}\}_{n=n_0}^\infty\subset {S}_{1\leq3}$.

{\textit{Proof of Claim}}. If $x_1\leq x_3,$ then
$x'_1=x_1^2+2ax_1(1-x_1)\leq x_3^2+2(1-a)x_1(1-x_1)=x'_3.$
This means that $V_{13}\left({S}_{1\leq3}\right)\subset{S}_{1\leq3}$.

Let $x^{(0)}$ $\notin$ ${S}_{1\leq3}$ and suppose that all elements of the trajectory belong to the set $S^2 \setminus {S}_{1\leq3}$, i.e., $\{x^{(n)}\}_{n=0}^\infty\subset S^2\setminus {S}_{1\leq3}$. It follows from $x_1^{(n)}\to 0$ and $x_3^{(n)}< x_1^{(n)}$ that $x_3^{(n)}\to 0.$ This with $\sum\limits_{i=1}^3 x_i^{(n)}=1$
implies that $x_2^{(n)}\to 1.$ On the other hand, we have that
$x_2^{(n+1)}=x_2^{(n)}(1-(x_1^{(n)}-x_3^{(n)}))\leq x_2^{(n)}.$
It yields that $\{x_2^{(n)}\}_{n=0}^\infty$ is decreasing, hence, it converges
to some point $x^{*}<1.$ This is a contradiction. This completes the proof of Claim.

Due to Claim, there exists $n_0$ such that $x_1^{(n)}\leq x_3^{(n)}$ for all $n\geq n_0$. Therefore, $x_2^{(n+1)}=x_2^{(n)}(1-(x_1^{(n)}-x_3^{(n)}))\geq x_2^{(n)}$ and $\{x_2^{(n)}\}_{n=n_0}^\infty$ is an increasing sequence which converges to $x_{2}^{*}.$ This yields that $x^{(n)}=(x_1^{(n)},x_2^{(n)},x_3^{(n)})$ converges to $(0,x_2^*,1-x_2^*),$ where $x_2^*>0$. We know that $(0,x_2^*,1-x_2^*)$ should be a fixed point. Consequently, $(0,x_2^*,1-x_2^*)=(0,1,0)$ and $\omega_{V_{13}}(x^{(0)})=\{e_2\}.$

\item[(iii)] Let $\frac{1}{2}<a\leq1.$
 Due to Proposition \ref{iz1} (iv), we have that  $\lim\limits_{n\to\infty}x_1^{(n)}=1,$ whenever $0<x_1^{(0)}<1.$  Therefore, if $x_1^{(0)}\neq 0$ then $\omega_{V_{13}}(x^{(0)})\subset\{e_1\}$.   Since $\omega_{V_{13}}(x^{(0)})$ is not empty, we obtain that $\omega_{V_{13}}(x^{(0)})=\{e_1\}.$ Let $x_1^{(0)}= 0,$ then $x_1^{(n)}= 0$ for all $n\in\mathbb{N}$.
Moreover, we have that $x_3^{(n+1)}=(x_3^{(n)})^2$ and
$\lim\limits_{n\to\infty}x_3^{(n)}=0.$ This means that
$\lim\limits_{n\to\infty}x_2^{(n)}=1.$
 Therefore, if $x_{1}^{(0)}=0$ then $\omega_{V_{13}}(x^{(0)})=\{e_2\}$.

\item[(iv)] Let $a=\frac{1}{2}$ and $x^{(0)}_1>\frac{1}{2}.$ Then
$x_1^{(n)}=x_1^{(0)}>\frac{1}{2}$ for any $n\in\mathbb{N}$. Since
$x_3^{(n)}=1-x_1^{(n)}-x_2^{(n)}$, one gets that $x_3^{(n)}<
\frac{1}{2}<x_1^{(n)}.$ This implies that $\{x_2^{(n)}\}_{n=0}^{\infty}$ is
decreasing, and hence it converges to $x_2^*.$ Consequently,
$(x_1^{(n)},x_2^{(n)},x_3^{(n)})\to (x_1^{(0)},x_2^{*},x_3^{*}).$ We know that $(x_1^{(0)},x_2^{*},x_3^{*})$ should be a fixed point. Since $x_1^{(0)}>\frac{1}{2}\geq x_3^{(*)}$, we find that $x_2^{*}=0$ and $x_3^{*}=1-x_1^{(0)}.$ This means that $\omega_{V_{13}}(x^{(0)})=\left\{\left(x_1^{(0)},0,1-x_1^{(0)}\right)\right\}$.

Let $x_1^{(0)}\leq \frac{1}{2}.$ Then $x_1^{(n)}=x_1^{(0)}\leq
\frac{1}{2}$ for any $n\in\mathbb{N}$. Since $x'_3-x'_1=x_3^2-x_1^2$, we have that $V\left({S}_{1\leq3}\right)\subset {S}_{1\leq3}$ and $V\left({S}_{1\geq3}\right)\subset {S}_{1\geq3}$. If $x^{(0)}=(x_1^{(0)},x_2^{(0)},x_3^{(0)})\in {S}_{1\leq3}$
then $x^{(n)}=(x_1^{(n)},x_2^{(n)},x_3^{(n)})\in {S}_{1\leq3}$. This yields that $\{x_2^{(n)}\}_{n=0}^{\infty}$ is decreasing and hence, it converges to $x_2^*.$
Therefore, $(x_1^{(n)},x_2^{(n)},x_3^{(n)})$ converges to $(x_1^{(0)},x_2^{(*)},x_3^{(*)})$. Since $(x_1^{(0)},x_2^{*},x_3^{*})\in {S}_{1\leq3}$ is a fixed point, we have that $x_2^{(*)}=1-2x_1^{(0)}, x_3^{(*)}=x_1^{(0)}.$ In the similar manner, one may have that if $x^{(0)}\in {S}_{1\geq3}$ then $\omega_{V_{13}}(x^{(0)})=\left\{\left(x_1^{(0)},1-2x_1^{(0)},x_1^{(0)}\right)\right\}.$
\end{enumerate}

This completes the proof.
\end{proof}

\section{Dynamics of $\xi^{(s)}$-QSO from the class $K_{4}$}

We are going to study dynamics of a $\xi^{(s)}$-QSO $V_4:S^2\to S^2$ taken from $K_{4}$:
\begin{equation}\label{op4}
V_{4}:\left\{
\begin{array}{l}
x_1'=x_1^2+2ax_1(1-x_1)\\
x_2'=x_3^2+2(1-a)x_1(1-x_1)\\
x_3'=x_2^2+2x_2x_3
\end{array} \right.
\end{equation}
 where $0\leq a\leq1$. One can immediately see that this operator is a permuted
$\ell$-Volterra-QSO. As we mentioned, the behavior of
such kinds of operators is not studied yet. It is worth mentioning that $V_{4}$ is a permutation of $V_{13}$.

Let
\begin{eqnarray*}
B(b)=\frac{3-2b-\sqrt{4b^2-8b+5}}{2}, &&\ \ \ b\in [0,1],\\
C_{\pm}(c)=\frac{1-2c\pm\sqrt{4c^2-8c+1}}{2}, && \ \ \ c\in \left[0,\frac{2-\sqrt{3}}{2}\right].
\end{eqnarray*}

It is clear that
$$0\leq B(b)=\frac{2(1-b)}{1+2(1-b)+\sqrt{1+4(1-b)^2}}\leq 1$$
$$0\leq \frac{1-2c\pm\sqrt{(1-2c)^2-4c}}{2}=C_{\pm}(c)\leq \frac{1-2c+\sqrt{4c^2+4c+1}}{2}=1$$
for any $b\in [0,1]$ and $c\in[0,\frac{2-\sqrt{3}}{2}]\subset [0,\frac{1}{2}]$.

 \begin{thm}\label{231} Let ${V}_{4}:S^2\rightarrow S^2$ be a
 $\xi^{(s)}$-QSO given by \eqref{op4}, $x^{(0)}=\left( x_1^{(0)},x_2^{(0)},x_3^{(0)}\right) \\ \notin Fix(V_{4})\bigcup Per_2(V_{4})$ be an initial point.  Then the following statements hold true:
\begin{enumerate}
 \item[(i)]
One has that
$$
Fix(V_{4})=
\begin{cases}
\bigg\{e_1,\big(0,\frac{3-\sqrt{5}}{2},\frac{-1+\sqrt{5}}{2}\big)\bigg\} & \text{if} \quad a\neq\frac{1}{2}\\
\bigg\{\left(b,B(b),1-B(b)\right)\bigg\}_{b\in[0,1]} & \text{if} \quad a=\frac{1}{2}\\
\end{cases}
$$

\item[(ii)]One
has that
$$
Per_2(V_{4})=
\begin{cases}
\{e_2,e_3\} & \text{if} \quad a\neq\frac{1}{2}\\
\bigg\{\left(c,C_{\pm}(c),1-C_{\pm}(c)\right)\bigg\}_{c\in[0,\frac{2-\sqrt{3}}{2})} & \text{if} \quad a=\frac{1}{2}\\
\end{cases}
$$

\item[(iii)] If $\frac{1}{2}<a\leq1$ then
$$
\omega_{V_{4}}(x^{(0)})=
\begin{cases}
\{e_1\} & \text{if}   \quad x_1^{(0)}\neq 0, \\
\{e_2,e_3\} & \text{if}   \quad x_1^{(0)}=0.\\
\end{cases}
$$
\item[(iv)]  If $a=\frac{1}{2}$ then
$$
\omega_{V_{4}}(x^{(0)})=
\begin{cases}
\bigg\{\left(x^{(0)}_1,C_{\pm}\left(x^{(0)}_1\right),1-C_{\pm}\left(x^{(0)}_1\right)\right)\bigg\} & \text{if}   \quad  x_1^{(0)}\in [0,\frac{2-\sqrt{3}}{2})\\
\bigg\{\left(x^{(0)}_1,B\left(x^{(0)}_1\right),1-B\left(x^{(0)}_1\right)\right)\bigg\} & \text{if}   \quad x_1^{(0)}\in  [\frac{2-\sqrt{3}}{2}, 1]
\end{cases}
$$
\end{enumerate}
\end{thm}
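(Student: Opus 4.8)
The plan is to treat the two regimes $a\ne\tfrac12$ and $a=\tfrac12$ separately, and in each regime to exploit the decoupling structure of $V_4$: the first coordinate evolves autonomously by $x_1'=f_a(x_1)$, so Proposition \ref{iz1} controls $x_1^{(n)}$ completely, and the remaining dynamics live on the one-parameter family of ``slices'' of fixed $x_1$-value (when $a=\tfrac12$) or on the limiting edge $\Gamma_1$ (when $a\ne\tfrac12$). First I would prove (i) and (ii) by brute-force algebra: the fixed-point system $x_1=f_a(x_1)$, $x_2=x_3^2+2(1-a)x_1(1-x_1)$, $x_3=x_2^2+2x_2x_3$ forces $x_1\in\{0,1\}$ when $a\ne\tfrac12$, and then $e_1$ and $(0,\tfrac{3-\sqrt5}{2},\tfrac{-1+\sqrt5}{2})$ fall out from the scalar equations on $\Gamma_1$ (where $x_3=x_2^2+2x_2x_3$ becomes $x_2=x_3^2$, $x_3=1-x_2$, giving $x_2^2+x_2-1=0$); for the period-two points one composes $V_4$ with itself on $\Gamma_1$, observing that $V_4$ swaps the roles of $x_2$ and $x_3$ there, so $Per_2$ on $\Gamma_1$ is the solution set of $x_2=x_3^2$ under the flip, yielding $\{e_2,e_3\}$. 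When $a=\tfrac12$ the first coordinate is frozen, $x_1^{(n)}\equiv x_1^{(0)}=:b$, and on each slice $x_1=b$ the equations reduce to a quadratic in $x_2$ whose roots are exactly $B(b)$ (fixed) and, after composing twice, $C_{\pm}(c)$ (genuine $2$-cycles); here I would simply verify that $B(b)$ and $C_\pm(c)$ as defined solve the relevant quadratics and check the stated ranges using the two displayed inequalities preceding the theorem.

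For (iii), assume $\tfrac12<a\le1$. By Proposition \ref{iz1}(iv), if $x_1^{(0)}\ne0$ then $x_1^{(n)}\to1$, hence $x_2^{(n)},x_3^{(n)}\to0$ and $\omega_{V_4}(x^{(0)})\subseteq\{e_1\}$; since $\omega$-limit sets are nonempty this gives $\{e_1\}$. If $x_1^{(0)}=0$ then the trajectory stays on $\Gamma_1$, where the operator acts as $x_2'=x_3^2$, $x_3'=x_2^2+2x_2x_3=1-x_3'$ — i.e. essentially the one-dimensional map $x_3\mapsto (1-x_3)^2$ on $[0,1]$. The plan is to analyze this scalar map directly: its fixed point is $\tfrac{3-\sqrt5}{2}$, its unique $2$-cycle is $\{0,1\}$ (corresponding to $e_2,e_3$), and I claim every non-eventually-fixed orbit converges to that $2$-cycle. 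I would show this by passing to the second iterate $g(t)=(1-(1-t)^2)^2$, checking that $g$ is increasing on $[0,1]$ with fixed points $0$, $\tfrac{3-\sqrt5}{2}$, $1$, so that on each of the two subintervals the iterates of $g$ are monotone and converge; the middle fixed point $\tfrac{3-\sqrt5}{2}$ is repelling for $g$ (derivative $>1$), so only the endpoints $0,1$ attract, giving $\omega=\{e_2,e_3\}$. Excluding $Per_2(V_4)=\{e_2,e_3\}$ and $Fix(V_4)$ from the hypothesis is exactly what removes the degenerate orbits.

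For (iv), with $a=\tfrac12$ the first coordinate is a constant $x_1^{(0)}=:b$ and everything reduces to the dynamics on the segment $\{x_1=b\}$, parametrized by $t=x_3$ (with $x_2=1-b-t$). Write down the induced scalar map $\phi_b(t)$ from the third equation; it is again a quadratic conjugate, after an affine change of variable, to a map of the form $t\mapsto(\text{const}-t)^2$ scaled to the interval $[0,1-b]$. I would then run the same second-iterate monotonicity argument: on $[\tfrac{2-\sqrt3}{2},1]$ the slice contains the attracting fixed point $B(b)$ and the orbit converges to it, while on $[0,\tfrac{2-\sqrt3}{2})$ the fixed point $B(b)$ becomes repelling and the attracting object is the $2$-cycle $\{C_+(b),C_-(b)\}$, so $\omega_{V_4}(x^{(0)})=\{(b,C_\pm(b),1-C_\pm(b))\}$; the threshold $b=\tfrac{2-\sqrt3}{2}$ is precisely where $4b^2-8b+1=0$, i.e. where $C_\pm$ collide with each other and the $2$-cycle is born out of a period-doubling of $B(b)$. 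The main obstacle I expect is (iv) together with the $2$-cycle half of (ii): one must verify the monotonicity and the derivative (stability) computations for the second-iterate map uniformly in the parameter $b$, and correctly identify the bifurcation value $\tfrac{2-\sqrt3}{2}$ and the fact that below it the basin of the $2$-cycle is the whole slice minus the repelling fixed point; the algebra connecting $B(b)$, $C_\pm(b)$, and the roots of $4b^2-8b+5$ and $4c^2-8c+1$ is routine but must be pinned down carefully, and one must not forget to invoke $x^{(0)}\notin Fix(V_4)\cup Per_2(V_4)$ to discard the finitely many exceptional slices' exceptional points.
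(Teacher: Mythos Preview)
Your proposal is correct and follows essentially the same route as the paper: exploit that $x_1$ evolves autonomously via $f_a$ (Proposition~\ref{iz1}), reduce to a one-dimensional map on $\Gamma_1$ (for $a\neq\tfrac12$) or on each slice $\{x_1=b\}$ (for $a=\tfrac12$), and analyze the second iterate of that map using its monotonicity and its three fixed points. The paper parametrizes the slice map by $x_2$ (writing $h_c(x_2)=(1-c-x_2)^2+c(1-c)$) rather than $x_3$, and determines the sign of $h_c^{(2)}(x_2)-x_2$ directly on each subinterval rather than via the derivative test you mention, but this is only a cosmetic difference. (Two small slips to fix when you write it up: your quadratic $x_2^2+x_2-1=0$ should be $x_3^2+x_3-1=0$, and the edge map is $x_2\mapsto(1-x_2)^2$, not $x_3\mapsto(1-x_3)^2$.)
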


\begin{proof} Let $x^{(0)}=\left(x_1^{(0)},x_2^{(0)},x_3^{(0)}\right)\notin Fix(V_{4})\bigcup Per_2(V_{4})$ be an initial
point and $\{x^{(n)}\}_{n=0}^\infty$ be a trajectory of $V_{4}$ starting from the point $x^{(0)}$.
\begin{enumerate}
\item[(i)]  In order to find fixed points of $V_4$, we have to solve the following system:
\begin{equation}\label{s1}
\left\{
\begin{array}{l}
x_1=x_1^2+2ax_1(1-x_1)\\
x_2=x_3^2+2(1-a)x_1(1-x_1)\\
x_3=x_2^2+2x_2x_3
\end{array} \right.
\end{equation}

Let $a\neq \frac{1}{2}.$ From the first equation of the system \eqref{s1},
 one can find that $x_1=0$ or $x_1=1$ (see Proposition \ref{iz1} (i)). If $x_1=1$ then $x_2=x_3=0.$ If $x_1=0$ then the second equation of the system \eqref{s1} becomes as follows
\begin{equation*}
x_2^2-3x_2+1=0.
\end{equation*}
So, the solutions of this quadratic equation are $x_2^{\pm}=\
\frac{3\pm\sqrt{5}}{2}.$ We can verify that the only solution
$x_2=\frac{3-\sqrt{5}}{2}$  belongs to $[0,1]$. Therefore,
one has $x_3=\frac{-1+\sqrt{5}}{2}.$ Hence,
$Fix(V_4)=\{e_1,\big(0,\frac{3-\sqrt{5}}{2},\frac{-1+\sqrt{5}}{2}\big)\}$ whenever $a\neq \frac{1}{2}$.

Let $a=\frac{1}{2}$. The system \eqref{s1} then takes the
following form
 \begin{equation}\label{s2}
\begin{cases}
x_1=x_1\\
x_2=x_3^2+x_1(1-x_1)\\
x_3=x_2^2+2x_2x_3
\end{cases}
\end{equation}
So, by letting $x_1=b$ (any $b\in[0,1]$), the
second equation of the system \eqref{s2} can be written as follows
\begin{equation*}
x_2^2-(3-2b)x_2+(1-b)=0.
\end{equation*}
The solutions of the last equation are
$x_2^{\pm}=\frac{3-2b\pm\sqrt{4b^2-8b+5}}{2}$. One can check that the
only solution $x_2=\frac{3-2b-\sqrt{4b^2-8b+5}}{2}=B(b)$ belongs to
$[0,1]$. Therefore, one has that $Fix(V_4)=\{(b,B(b),1-B(b))\}_{b\in[0,1]}$ whenever $a=\frac{1}{2}.$

\item[(ii)] Let $a\neq\frac{1}{2}$.  Now,
we are going to show that the operator $V_4$ given by \eqref{op4}
does not have any order periodic points in the set $S^2\setminus \Gamma_1,$ where
$\Gamma_1=\{x\in S^{2}: x_1=0\}$. In fact, since the
function $f_a(x)=x^2+2ax(1-x)$ is increasing (due to Proposition
\ref{iz1} (ii)), the first coordinate of $V_4$ increases along the iteration
of $V_4$ in the set $S^2\setminus \Gamma_1$. This means that $V_4$ doesn't have any order periodic points in the set $S^2\setminus \Gamma_1$. Therefore, it is enough to find periodic points of $V_4$ in $\Gamma_1$. In this case, in order to find 2-periodic
points, we have to solve the following system of equations:
\begin{equation*}
\begin{cases}
x_1=0\\
x_2=(1-(1-x_2)^2)^2\\
x_3=1-(1-x_3^2)^2
\end{cases}
\end{equation*}
The solutions of the second equation of the last system are $0,1,\frac{3\pm\sqrt{5}}{2}$. Hence, we have that $ Per(V_4)=\{e_2,e_3\}$ whenever $a\neq \frac{1}{2}.$

Let $a=\frac{1}{2}.$ In order to find 2-periodic
points of $V_4$, we should solve the following system of equations
\begin{equation}\label{s4}
\begin{cases}
x_1=x_1\\
x_2=x_2^2(1-x_1+x_3)^2+x_1(1-x_1)\\
x_3=(x_3^2+x_1(1-x_1))(1-x_1+x_2(1-x_1+x_3)).
\end{cases}
\end{equation}
By letting $x_1=c$, where $c\in[0,1]$, the second equation of the system \eqref{s4} reduces to the following equation
\begin{equation*}
x_2^2(2-2c-x_2)^2+c(1-c)=x_2.
\end{equation*}
One can easily check that the solutions of the last equation which belong to $[0,1]$ are
only $B(c)$, $C_{\pm}(c)$ whenever $c\in[0,\frac{2-\sqrt{3}}{2})$ and $B(c)$ whenever $c\in[\frac{2-\sqrt{3}}{2},1]$. Consequently, we get that $Per_2(V_{4})=
\{\left(c,C_{\pm}(c),1-C_{\pm}(c)\right)\}_{c\in[0,\frac{2-\sqrt{3}}{2})}$.

\item[(iii)] Let $\frac{1}{2}<a\leq1$ and
$x_1^{(0)}=0$. In this case, the second coordinate of $V_4$ has the form $x'_2=h(x_2)$, where $h(x_2)=(1-x_2)^2.$ It is clear that the function $h$ is decreasing
on $[0,1]$. This yields that the function $h^{(2)}$ is increasing on [0,1]. As we already discussed in (i) and (ii) that $Fix(h)\cap [0,1]=\{\frac{3-\sqrt{5}}{2}\}$ and $Fix\left(h^{(2)}\right)\cap [0,1]=\{0,\frac{3-\sqrt{5}}{2},1\}$.
This means that the sets $[0,\frac{3-\sqrt{5}}{2}]$ and $[\frac{3-\sqrt{5}}{2},1]$ are invariant under the function $h^{(2)}$.  We immediately find (see the above discussion (ii)) that $h^{(2)}(x_2)>x_2$ whenever $x_2>\frac{3-\sqrt{5}}{2}$ and $h^{(2)}(x_2)<x_2$ whenever $x_2<\frac{3-\sqrt{5}}{2}$.  Consequently, one has that if $x_2^{(0)}\in[0,\frac{3-\sqrt{5}}{2})$ then $\omega_{h^{(2)}}(x_2^{(0)})=\{0\}$ and if $x_2^{(0)}\in(\frac{3-\sqrt{5}}{2},1]$ then $\omega_{h^{(2)}}(x_2^{(0)})=\{1\}$.  On the other hand, we have that
$$
V_{4}^{(n)}(x^{(0)})=
\begin{cases}
\left(0,h^{(2k)}\left(x_2^{(0)}\right), 1-h^{(2k)}\left(x_2^{(0)}\right)\right) & \text{if} \quad n=2k\\
\left(0,h^{(2k)}\left(h\left(x_2^{(0)}\right)\right), 1-h^{(2k)}\left(h\left(x_2^{(0)}\right)\right)\right) & \text{if} \quad n=2k+1
\end{cases}
$$
Therefore, we obtain that $\omega_{V_{4}}(x^{(0)})=\{e_2,e_3\}$ if $x_1^{(0)}=0$.

Let $\frac{1}{2}<a\leq 1$ and $0<x_1^{(0)}<1.$ In this case,
it is clear that $x_1^{(n)}=f_a^{(n)}(x^{(0)}_1).$ Therefore, due to Proposition \ref{iz1} (iv), we have that
$\lim\limits_{n\to\infty}x_1^{(n)}=1.$ This means that $\omega_{V_{4}}(x^{(0)})\subset\{e_1\}$.   Since $\omega_{V_{4}}(x^{(0)})\neq\emptyset$, one has that $\omega_{V_{4}}(x^{(0)})=\{e_1\}.$

\item[(iv)] Let $a=\frac{1}{2}$. Then the operator $V_4$ takes the following form:
\begin{equation}\label{s5}
\begin{cases}
x_1^{'}=x_1\\
x_2^{'}=x_3^2+x_1(1-x_1)\\
x_3^{'}=x_2(1-x_1+x_3)
\end{cases}
\end{equation}
It is clear that $L_{c}=\{x\in S^2: x_1=c\}$ is invariant under $V_4$ where $c\in [0,1].$ Therefore, we shall study the dynamics of $V_4$ over $L_c.$

Let $x^{(0)}_1=c\in [0,\frac{2-\sqrt{3}}{2})$ be a fixed number. Let us consider the function $h_c:[0,1-c]\to[0,1-c]$
$$h_c(x_2)=(1-c-x_2)^2+c(1-c).$$
One can show that the function $h_c$ is decreasing on $[0,1-c].$ This yields that the function $h_c^{(2)}$ is increasing. It follows from the discussion presented above (see (ii)) that $Fix(h_c)\cap[0,1-c]=\{B(c)\}$ and $Fix(h_c^{(2)})\cap[0,1-c]=\{B(c),C_\pm(c)\}$ where $C_{-}(c)<B(c)<C_{+}(c)$. Moreover, one has that
\begin{eqnarray*}
h_c[0,C_{-}(c)]\subset[C_{+}(c),1-c], && h_c[C_{-}(c),B(c)]\subset[B(c),C_{+}(c)],\\ h_c[B(c),C_{+}(c)]\subset[C_{-}(c),B(c)], &&  h_c[C_{+}(c),1-c]\subset[0,C_{-}(c)],
\end{eqnarray*}
Therefore, the sets $[0,C_{-}(c)]$, $[C_{-}(c),B(c)]$, $[B(c),C_{+}(c)]$, $[C_{+}(c),1-c]$ are invariant under the function $h_c^{(2)}$. Simple calculations show that
\begin{eqnarray*}
h_c^{(2)}(x_2)>x_2, &&  \forall \ x_2\in\bigg[0,C_{-}(c)\bigg)\cup \bigg(B(c),C_{+}(c)\bigg),\\
h_c^{(2)}(x_2)<x_2, &&  \forall \ x_2\in\bigg(C_{-}(c),B(c)\bigg)\cup \bigg(C_{+}(c),1-c\bigg].
\end{eqnarray*}
Consequently, we get that
\begin{eqnarray*}
\omega_{h_c^{(2)}}(x_2^{(0)})=\{C_{-}(c)\}, &&  \forall \ x_2^{(0)}\in\bigg[0,B(c)\bigg),\\
\omega_{h_c^{(2)}}(x_2^{(0)})=\{C_{+}(c)\}, &&  \forall \ x_2^{(0)}\in\bigg(B(c),1-c\bigg].
\end{eqnarray*}

On the other hand, we have that
$$
V_{4}^{(n)}(x^{(0)})=
\begin{cases}
\left(c,h_c^{(2k)}\left(x_2^{(0)}\right), 1-h_c^{(2k)}\left(x_2^{(0)}\right)\right) & \text{if} \quad n=2k\\
\left(c,h_c^{(2k)}\left(h_c\left(x_2^{(0)}\right)\right), 1-h_c^{(2k)}\left(h_c\left(x_2^{(0)}\right)\right)\right) & \text{if} \quad n=2k+1
\end{cases}
$$
Therefore, we obtain that
$$\omega_{V_{4}}(x^{(0)})=\bigg\{\bigg(c,C_{\pm}(c),1-C_{\pm}(c)\bigg)\bigg\} \quad \text{if} \quad x^{(0)}_1=c\in \left[0,\frac{2-\sqrt{3}}{2}\right).$$
In the same manner, one can show that
$\omega_{V_{4}}(x^{(0)})=\bigg\{\bigg(c,B(c),1-B(c)\bigg)\bigg\}$ whenever $x^{(0)}_1=c\in \left[\frac{2-\sqrt{3}}{2},1\right]$
\end{enumerate}
This completes the proof.
\end{proof}

\section{Dynamics of $\xi^{(s)}$-QSO from the class $K_{19}$}

We are going to study dynamics of a $\xi^{(s)}$-QSO $V_{28}:S^2\to S^2$ taken from $K_{19}$:
\begin{equation}\label{op}
V_{28}:\left\{
\begin{array}{l}
x'_1=x_1^2+2x_1(1-x_1)\\
x'_2=x_3^2+2ax_2x_3\\
x'_3=x_2^2+2(1-a)x_2x_3
\end{array} \right.
\end{equation}
 where $0\leq a\leq1$. One can see that this operator is a permuted
Volterra-QSO. The behavior of
this operator was not studied in \cite{GK,18,19}. It is worth mentioning that $V_{28}$ is a permutation of $V_{25}$.

Let $A=\frac{3-2a-\sqrt{4+(2a-1)^2}}{2(1-2a)}$ for any $a\neq \frac{1}{2}$. Then $0\leq A\leq 1$. In fact, one has that
$$0\leq A=\frac{2}{2+(1-2a)+\sqrt{4+(2a-1)^2}}\leq 1.$$

 \begin{thm}\label{23} Let ${V}_{28}:S^2\rightarrow S^2$ be a
 $\xi^{(s)}$-QSO given by \eqref{op}, $x^{(0)}=\left( x_1^{(0)},x_2^{(0)},x_3^{(0)}\right) \\ \notin Fix(V_{28})\bigcup Per_2(V_{28})$ be an initial point. Then the following statements hold true:
\begin{enumerate}
 \item[(i)] One has that
$$Fix(V_{28})=
\begin{cases}
\{e_1,(0,A,1-A)\} &  \text{if} \quad a\neq\frac{1}{2}\\
\{e_1,(0,\frac{1}{2},\frac{1}{2})\} &  \text{if} \quad a=\frac{1}{2}
\end{cases}
$$
\item[(ii)]One
has that
$$
Per_2(V_{28})=
\begin{cases}
\{e_2,e_3\} & \text{if} \quad a\neq\frac{1}{2}\\
\Gamma_1\setminus \{(0,\frac{1}{2},\frac{1}{2})\}& \text{if} \quad  a=\frac{1}{2}
\end{cases}
$$

\item[(iii)]  If $a\neq\frac{1}{2}$ then
$$
\omega_{V_{28}}(x^{(0)})=
\begin{cases}
\{e_2,e_3\}  & \text{if}   \quad x_1^{(0)}=0 \\
\{e_1\}  & \text{if}   \quad x_1^{(0)}\neq 0
\end{cases}
$$

\item[(iv)] If $a=\frac{1}{2}$ then $\omega_{V_{28}}(x^{(0)})=\{e_1\}.$
\end{enumerate}
\end{thm}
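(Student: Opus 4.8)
The plan rests on the observation that the first coordinate of $V_{28}$ evolves autonomously: since $x_1'=x_1^2+2x_1(1-x_1)=1-(1-x_1)^2$, one gets $1-x_1^{(n)}=\big(1-x_1^{(0)}\big)^{2^n}$, so $x_1^{(n)}\to 1$ whenever $x_1^{(0)}>0$ and $x_1^{(n)}\equiv 0$ whenever $x_1^{(0)}=0$. This at once disposes of every case with $x_1^{(0)}\neq 0$: then $x_2^{(n)},x_3^{(n)}\to 0$, so $\omega_{V_{28}}(x^{(0)})\subset\{e_1\}$ and, being non-empty, equals $\{e_1\}$. In particular (iv) follows once we have (i) and (ii) for $a=\tfrac12$, because then $Fix(V_{28})\cup Per_2(V_{28})=\{e_1\}\cup\Gamma_1$, so any admissible initial point has $0<x_1^{(0)}<1$.

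For (i) I would note that the first-coordinate equation $x_1=x_1^2+2x_1(1-x_1)$ contains no $a$ and forces $x_1\in\{0,1\}$; the value $x_1=1$ gives $e_1$, while $x_1=0$ reduces the second equation (using $x_3=1-x_2$) to the quadratic $(2a-1)x_2^2+(3-2a)x_2-1=0$. When $a=\tfrac12$ this is linear with root $x_2=\tfrac12$; when $a\neq\tfrac12$ a discriminant computation gives the roots $\tfrac{3-2a\mp\sqrt{4+(2a-1)^2}}{2(1-2a)}$, and a short estimate (using the stated identity $A=\tfrac{2}{2+(1-2a)+\sqrt{4+(2a-1)^2}}$) shows only $A$ lies in $[0,1]$, the other root being $>1$ when $a<\tfrac12$ and $<0$ when $a>\tfrac12$.

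For (ii) the key remark is that, because $1-x_1^{(n)}$ is strictly monotone unless $x_1^{(0)}\in\{0,1\}$, every periodic point must lie in $\Gamma_1$ (the case $x_1=1$ contributing only the fixed point $e_1$). On the invariant edge $\Gamma_1$ the operator acts by $x_2\mapsto g(x_2)$ with $g(t)=(1-t)^2+2at(1-t)=(1-t)\big(1-(1-2a)t\big)$. For $a=\tfrac12$ one computes $g(t)=1-t$, an involution, so every point of $\Gamma_1$ other than $(0,\tfrac12,\tfrac12)$ has exact period $2$. For $a\neq\tfrac12$, $g$ is strictly decreasing on $[0,1]$ (immediate from $g'(t)=-2(1-a)(1-t)-2at\le 0$), hence $g^{(2)}$ is increasing; the genuine quartic $g^{(2)}(t)-t$ is divisible by $g(t)-t$ (whose only root in $[0,1]$ is $A$), and the complementary quadratic factor vanishes at the $2$-cycle $\{0,1\}$ since $g(0)=1,\ g(1)=0$ — this accounts for all four roots, so $Per_2(V_{28})=\{e_2,e_3\}$.

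Finally, for (iii) with $x_1^{(0)}=0$ I would analyse $g$ on $[0,1]$ directly: $g$ decreasing, $g^{(2)}$ increasing, $Fix(g^{(2)})\cap[0,1]=\{0,A,1\}$, and $(g^{(2)})'(0)=(g^{(2)})'(1)=g'(0)g'(1)=4a(1-a)<1$ (strict because $a\neq\tfrac12$). Hence $g^{(2)}(t)<t$ on $(0,A)$ and $g^{(2)}(t)>t$ on $(A,1)$, so $g^{(2n)}(x_2^{(0)})$ decreases to $0$ if $x_2^{(0)}\in(0,A)$ and increases to $1$ if $x_2^{(0)}\in(A,1)$. Since the decreasing map $g$ swaps the intervals $(0,A)$ and $(A,1)$, the even iterates of $V_{28}$ converge to one of $e_2,e_3$ and the odd iterates to the other, giving $\omega_{V_{28}}(x^{(0)})=\{e_2,e_3\}$. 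I expect the main obstacle to be the bookkeeping in (ii): one must argue cleanly that the quartic $g^{(2)}(t)-t$ has no roots in $[0,1]$ beyond $0$, $A$, $1$ (equivalently, that $g$ has no $2$-cycle inside $(0,1)$), and then track the even/odd subsequences carefully so as to identify the true $\omega$-limit set rather than merely the limit set of $g^{(2)}$.
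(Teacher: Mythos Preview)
Your proposal is correct and follows essentially the same route as the paper: reduce the first coordinate to the autonomous map $x_1\mapsto 1-(1-x_1)^2$, restrict the search for periodic points to $\Gamma_1$, and analyse the one-dimensional map $g_a(t)=(1-t)^2+2at(1-t)$ via the monotonicity of $g_a$ and $g_a^{(2)}$. Your presentation is in places tidier than the paper's --- the closed formula $1-x_1^{(n)}=(1-x_1^{(0)})^{2^n}$, the factorisation $g^{(2)}(t)-t=(g(t)-t)\cdot q(t)$ with $q$ forced to vanish at the $2$-cycle $\{0,1\}$, and the derivative check $(g^{(2)})'(0)=(g^{(2)})'(1)=4a(1-a)<1$ to pin down the sign of $g^{(2)}(t)-t$ --- but these are refinements of the same argument rather than a different strategy.
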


\begin{proof} Let $x^{(0)}=\left( x_1^{(0)},x_2^{(0)},x_3^{(0)}\right)\notin Fix(V_{28})\bigcup Per_2(V_{28})$ be an initial
point and $\{x^{(n)}\}_{n=0}^\infty$ be a trajectory of $V_{28}$ starting from the point $x^{(0)}$.

\begin{enumerate}
\item[(i)]  In order to find fixed points of $V_{28}$, we need to
 solve the following system of equations:
 \begin{equation}\label{op13}
\left\{
\begin{array}{l}
x_1$=$x_1^2+2x_1(1-x_1),\\
x_2$=$x_3^2+2ax_2x_3,\\
x_3$=$x_2^2+2(1-a)x_2x_3.
\end{array} \right.
\end{equation}

From the first equation of \eqref{op13} one can find that
$x_1=0$ or $x_1=1$. If $x_1=1$ then $x_2=x_3=0.$ If $x_1=0$ then
$x_2+x_3=1.$ So, the second equation of \eqref{op13} becomes as
follows:
\begin{equation}\label{a}
(1-x_2)^2+2ax_2(1-x_2)=x_2.
\end{equation}

Let $a\neq\frac{1}{2}.$ Then one can find
that solutions of \eqref{a} are
$x_2^{\pm}=\frac{3-2a\pm\sqrt{4+(2a-1)^2}}{2(1-2a)}$. We can verify that the only solution which lies in the interval
$[0,1]$ is $x_2=\frac{3-2a-\sqrt{4+(2a-1)^2}}{2(1-2a)}=A.$
Therefore, we have $x_3=1-A=\frac{-1-2a+\sqrt{4+(2a-1)^2}}{2(1-2a)}.$
Hence, $Fix({V}_{28})= \{e_{1},(0,A,1-A)\}$ whenever $a\neq \frac{1}{2}.$

Let $a=\frac{1}{2}.$ Then \eqref{a} has a solution $x_2=\frac{1}{2}.$ This yields that $x_3=\frac{1}{2}.$ Therefore, one has that $Fix({V}_{28})= \{e_{1},(0,\frac{1}{2},\frac{1}{2})\}$ whenever $a= \frac{1}{2}$.

\item[(ii)] It is clear that $V_{28}$ does not have any order periodic points in $S^{2}\setminus \Gamma_1$ (see Proposition \ref{iz1} (ii)), where $\Gamma_1=\{x\in S^2: x_1=0\}$. So, any order periodic points of $V_{28}$ lie on $\Gamma_1$ (if any). In order to find 2-periodic points of ${V}_{28}$, we have to solve the equation
 ${V}_{28}^{2}(x)=x$ with the condition $x_1=0$. Then, by taking into account $x_2+x_3=1$, we may get the following equation
\begin{equation*}
\left[x_2^2+2(1-a)x_2(1-x_2)\right]^2+2a\left[(1-x_2)^2+2ax_2(1-x_2)][x_2^2+2(1-a)x_2(1-x_2)\right]=x_2.
\end{equation*}

Let $a\neq\frac {1}{2}.$ Then, the last equation has the solutions $\{0,1,\pm A\}$. So, 2-periodic points of $V_{28}$ are only $e_2=(0,1,0)$ and $e_3=(0,0,1)$.

Let $a=\frac {1}{2}.$ Then, the equation given above becomes an identity $x_2=x_2$. This means that all points of the edge $\Gamma_1$ except $(0,\frac{1}{2},\frac{1}{2})$ are 2-periodic points.

\item[(iii)] Let $a\neq\frac {1}{2}$. It is clear that the edge $\Gamma_1$ is invariant under
${V_{28}}$. We want to study the behavior of $V_{28}$ over this
line. In this case, $V_{28}\mid_{\Gamma_1}$ takes the following
form:
$$V_{28}\mid_{\Gamma_1}:\left\{
\begin{array}{l}
x_1'=0\\
x_2'=x_3^2+2ax_2x_3\\
x_3'=x_2^2+2(1-a)x_2x_3
\end{array} \right.$$

Let us consider the function $g_a(x_2)=(1-x_2)^2+2ax_2(1-x_2)$, where $a\neq\frac {1}{2}$. One can  easily check that $g_a$ is decreasing on $[0,1]$. This yields that $g^{(2)}_a$ is increasing on $[0,1]$. As we already discussed that $Fix(g_a)\cap [0,1]=\{A\}$ and $Fix\left(g^{(2)}_a\right)\cap [0,1]=\{0,A,1\}$. This means that the sets $[0,A]$ and $[A,1]$ are invariant under the function $g_a^{(2)}$. We immediately find (see the above discussion (ii)) that $g^{(2)}_a(x_2)>x_2$ whenever $x_2>A$ and $g^{(2)}_a(x_2)<x_2$ whenever $x_2<A$.  Consequently, one has that if $x_2^{(0)}\in[0,A)$ then $\omega_{g_a^{(2)}}(x_2^{(0)})=\{0\}$ and if $x_2^{(0)}\in(A,1]$ then $\omega_{g_a^{(2)}}(x_2^{(0)})=\{1\}$. On the other hand, we have that
$$
V_{28}^{(n)}(x^{(0)})=
\begin{cases}
\left(0,g_a^{(2k)}\left(x_2^{(0)}\right), 1-g_a^{(2k)}\left(x_2^{(0)}\right)\right) & \text{if} \quad n=2k\\
\left(0,g_a^{(2k)}\left(g_a\left(x_2^{(0)}\right)\right), 1-g_a^{(2k)}\left(g_a\left(x_2^{(0)}\right)\right)\right) & \text{if} \quad n=2k+1
\end{cases}
$$
Therefore, we obtain that $\omega_{V_{28}}(x^{(0)})=\{e_2,e_3\}$ if $x_1^{(0)}=0$.

Let $x_1^{(0)}\neq 0$. It is clear that $x_1^{(n)}=f_1^{(n)}(x^{(0)}_1).$ Therefore, due to Proposition \ref{iz1} (iv), we have that
$\lim\limits_{n\to\infty}x_1^{(n)}=1.$ This means that $\omega_{V_{28}}(x^{(0)})\subset\{e_1\}$.   Since $\omega_{V_{28}}(x^{(0)})$ is not empty, we obtain that $\omega_{V_{28}}(x^{(0)})=\{e_1\}.$

\item[(iv)] Let $a=\frac{1}{2}$. Since $x^{(0)}\notin Fix(V_{28})\bigcup Per_2(V_{28}),$ we have that $x_1^{(0)}>0$. Then, due to Proposition \ref{iz1} (iv), we again has that $x_1^{(n)}=f_1^{(n)}(x^{(0)}_1)$ and $\lim\limits_{n\to\infty}x_1^{(n)}=1$.  Since $\omega_{V_{28}}(x^{(0)})$ is not empty, we obtain that $\omega_{V_{28}}(x^{(0)})=\{e_1\}.$
\end{enumerate}

This completes the proof.
\end{proof}

\section{Dynamics of $\xi^{(s)}$-QSO from the class $K_{17}$}

We are going to highlight the dynamics of a $\xi^{(s)}$-QSO $V_{25}:S^2\to S^2$
taken $K_{17}$
\begin{equation}\label{op2}
V_{25}:\left\{
\begin{array}{l}
x_1'=x_1^2+2x_1(1-x_1)\\
x_2'=x_2^2+2ax_2x_3\\
x_3'=x_3^2+2(1-a)x_2x_3
\end{array} \right.
\end{equation}
where $0\leq a\leq1.$ One can immediately see that the operator \eqref{op2} is a Volterra-QSO. The dynamics of such kinds of operators have been studied in \cite{12,121,13}. By means of the results of the mentioned papers, one can formulate the following

\begin{thm} Let ${V}_{25}:S^2\rightarrow S^2$ be a
 $\xi^{(s)}$-QSO given by \eqref{op2} and $x^{(0)}=\left( x_1^{(0)},x_2^{(0)},x_3^{(0)}\right) \\ \notin Fix(V_{25})$ be an initial point. Then the following statements hold true:
\begin{enumerate}
\item[(i)] One has that
$$Fix(V_{25})=
\begin{cases}
\{e_1,e_2,e_3\} & \text{if} \quad a\neq\frac{1}{2}\\
\Gamma_1 & \text{if} \quad a=\frac{1}{2}
\end{cases}
$$

\item[(ii)] If $0\leq a<\frac {1}{2}$ then
$$\omega_{V_{25}}(x^{(0)})=
\begin{cases}
\{e_3\} & \text{if} \quad x_1^{(0)}=0\\
\{e_1\} & \text{if} \quad x_1^{(0)}\neq0
\end{cases}
$$

\item[(iii)] If $\frac {1}{2}<a\leq1$ then
$$\omega_{V_{25}}(x^{(0)})=
\begin{cases}
\{e_2\} & \text{if} \quad x_1^{(0)}=0\\
\{e_1\} & \text{if} \quad x_1^{(0)}\neq0
\end{cases}
$$

\item[(iv)] If $a=\frac {1}{2}$ then $\omega_{V_{25}}(x^{(0)})=\{e_1\}.$
\end{enumerate}
\end{thm}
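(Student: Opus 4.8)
The plan is to reduce everything to the one-dimensional dynamics of the map $f_1$ on $[0,1]$, exactly as was done for $V_{13}$, $V_4$ and $V_{28}$, and then to quote the cited Volterra results (or reprove them directly by a short monotonicity argument) for the boundary behaviour. First I would observe that $V_{25}$ is a genuine Volterra-QSO, so by the Remark on Volterra operators it has no periodic points, which is why only $Fix(V_{25})$ appears in the statement and no $Per_2$ term is needed. Since the first coordinate evolves by $x_1' = x_1^2 + 2x_1(1-x_1) = f_1(x_1)$, which is precisely $f_a$ with $a=1$, Proposition \ref{iz1} applies verbatim: $Fix(f_1)=\{0,1\}$, $f_1$ is increasing, and $\omega_{f_1}(x_0)=\{1\}$ for every $x_0\in(0,1)$. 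This immediately handles the generic case $x_1^{(0)}\neq 0$ in parts (ii), (iii) and the whole of part (iv): $x_1^{(n)}=f_1^{(n)}(x_1^{(0)})\to 1$, so $\omega_{V_{25}}(x^{(0)})\subseteq\{e_1\}$, and since $\omega_{V_{25}}(x^{(0)})\neq\emptyset$ by compactness, it equals $\{e_1\}$.

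Next I would settle part (i) by solving the fixed-point system directly. The first equation gives $x_1\in\{0,1\}$ when $a\neq\tfrac12$ (and becomes $x_1=x_1$ when $a=\tfrac12$, since $f_1$ with the stated coefficient structure... more precisely here the relevant coefficient is $1$, not $a$, so actually the first equation is always $x_1=f_1(x_1)$, forcing $x_1\in\{0,1\}$ regardless of $a$). Hmm — let me be careful: the $a$-dependence sits only in the second and third coordinates. So the first equation always yields $x_1=0$ or $x_1=1$. If $x_1=1$ then $x_2=x_3=0$, giving $e_1$. If $x_1=0$ then $x_2+x_3=1$ and the second equation reads $x_2 = x_2^2 + 2a x_2(1-x_2)$, i.e. $x_2\big((1-2a)x_2 + (2a-1)\big)=0$, i.e. $(1-2a)x_2(x_2-1)=0$. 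For $a\neq\tfrac12$ this forces $x_2\in\{0,1\}$, giving $e_3$ and $e_2$; for $a=\tfrac12$ every $x_2\in[0,1]$ works, so $Fix(V_{25})=\Gamma_1$. That matches the statement.

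Finally I would treat the boundary trajectories on $\Gamma_1=\{x_1=0\}$, which is invariant under $V_{25}$. On this edge $V_{25}$ restricts to $x_2'=x_2^2+2ax_2x_3 = x_2^2+2ax_2(1-x_2)=f_a(x_2)$. For $0\le a<\tfrac12$, Proposition \ref{iz1}(iv) gives $x_2^{(n)}\to 0$, hence $x_3^{(n)}\to 1$, so $\omega_{V_{25}}(x^{(0)})=\{e_3\}$; for $\tfrac12<a\le 1$, the same Proposition gives $x_2^{(n)}\to 1$, hence $\omega_{V_{25}}(x^{(0)})=\{e_2\}$. This completes all cases. There is no real obstacle here — the only mild subtlety, which I would flag, is confirming that the first-coordinate coefficient is indeed $1$ (not $a$), so that Proposition \ref{iz1} is applied with $a=1$; once that is in hand, every piece follows either from Proposition \ref{iz1} or from elementary quadratic algebra, and the theorem is essentially a corollary of the already-established one-dimensional analysis together with the cited Volterra results of \cite{12,121,13}.
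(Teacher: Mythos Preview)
Your proof is correct and complete, but it differs from the paper in one important respect: the paper does not prove this theorem at all. It simply observes that $V_{25}$ is a Volterra-QSO and states the result as a consequence of the cited works \cite{12,121,13}. You instead give a direct, self-contained argument that uses nothing beyond Proposition~\ref{iz1} on the one-dimensional map $f_a$, applied once with parameter $1$ to the first coordinate and once with parameter $a$ to the second coordinate on the invariant edge $\Gamma_1$. This is a cleaner route than appealing to the general Volterra theory: every step reduces to the monotone one-dimensional dynamics already analysed in the paper, so nothing external is needed.

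One small point worth tidying: in part (i) for $a=\tfrac12$ your own computation shows that $e_1$ is a fixed point (from $x_1=1$), yet you then write $Fix(V_{25})=\Gamma_1$, which omits $e_1$. The paper's statement has the same omission; strictly speaking the fixed-point set for $a=\tfrac12$ is $\{e_1\}\cup\Gamma_1$. This does not affect the rest of the argument, since in part (iv) the hypothesis $x^{(0)}\notin Fix(V_{25})$ still forces $x_1^{(0)}\in(0,1)$ either way.
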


\subsection*{Acknowledgments}
The authors acknowledge the MOHE grant ERGS13-024-0057  and the IIUM
grant EDW B 13-019-0904 for the financial support.


\begin{thebibliography}{9}

\bibitem{1} Bernstein S.,
Solution of a mathematical problem connected with the theory of
heredity. \textit{Annals of Math. Statis.} {\bf 13}(1942) 53--61.


\bibitem{10} Dohtani A., Occurrence of chaos in
higher-dimensional discrete-time systems,\textit{SIAM J.Appl. Math.}
\textbf{52} (1992) 1707--1721.

\bibitem{20} Ganikhodjaev N. N., Rozikov U. A., On quadratic stochastic
operators generated by Gibbs distributions. {\it Regul. Chaotic
Dyn.} {\bf 11} (2006), 467--473.

\bibitem{21} Ganikhodjaev N.N., An application of the theory of Gibbs
distributions to mathematical genetics. {\it Doklady Math} {\bf 61}
(2000), 321--323.

\bibitem{24} Ganikhodzhaev N. N., Mukhitdinov R. T., On a class of
measures corresponding to quadratic operators, {\it Dokl. Akad. Nauk
Rep. Uzb.} (1995), no. 3, 3--6 (Russian).



\bibitem{15} Ganikhodzhaev R. N., A family of quadratic stochastic operators
that act in $S\sp 2$. {\it Dokl. Akad. Nauk UzSSR}. (1989), no. 1,
3--5.(Russian)

\bibitem{12} Ganikhodzhaev R. N., Quadratic stochastic operators, Lyapunov
functions and tournaments.{\it Acad. Sci. Sb. Math.} {\bf 76}
(1993), no. 2, 489-506.

\bibitem{121} Ganikhodzhaev R. N., A chart of fixed points and Lyapunov
functions for a class of discrete dynamical systems. {\it Math.
Notes.} {\bf 56} (1994),  1125--1131.

\bibitem{13} Ganikhodzhaev R. N., Eshmamatova D. B., Quadratic automorphisms
of a simplex and the asymptotic behavior of their trajectories,
\textit{Vladikavkaz. Math. Jour.} {\bf 8} (2006), no. 2,
12--28.(Russian)

\bibitem{GK} Ganikhodzhaev R. N., Karimov A. Z., Mappings generated by a
cyclic permutation of the components of Volterra quadratic
stochastic operators whose coefficients are equal in absolute
magnitude. {\it Uzbek. Math. Jour.} No. 4 (2000), 16--21.(Russian)

\bibitem{11} Ganikhodzhaev R., Mukhamedov F., Rozikov U.,
Quadratic stochastic operators and processes: results and open
problems, \textit{Infin. Dimens. Anal. Quantum Probab. Relat. Top.}
{\bf 14}(2011) 270--335.



\bibitem{18} Ganikhodzhaev R. N., Dzhurabaev A. M., The set of equilibrium
states of quadratic stochastic operators of type $V_{\pi}$. {\it
Uzbek Math. Jour.}  No. 3 (1998), 23-27.(Russian)

\bibitem{19} Ganikhodzhaev R. N., Abdirakhmanova R. E., Description of
quadratic automorphisms of a finite-dimensional simplex. {\it Uzbek.
Math. Jour.} (2002), no.1 7--16.(Russian)


\bibitem{3} Hofbauer J., Hutson V. and Jansen W.,
Coexistence for systems governed by difference equations of
Lotka-Volterra type. \textit{Jour. Math. Biology}, {\bf 25} (1987)
553--570.

\bibitem{4} Hofbauer J. and Sigmund K.,
\textit{The theory of evolution and dynamical systems. Mathematical
aspects of selection}, Cambridge Univ. Press, 1988.


\bibitem{14}Jenks, R.D., Quadratic differential systems for interactive
population models. \textit{Jour. Diff. Eqs} {\bf 5} (1969) 497--514.

\bibitem{5} Kesten H., Quadratic transformations: a model for
population growth.I, II, {\it Adv. Appl.Probab}, (1970), no.2,
1--82; 179--228.

\bibitem{2} Li, S.-T., Li D.-M, Qu G.-K., On Stability and
Chaos of Discrete Population Model for a Single-species with
Harvesting, \textit{Jour. Harbin Univ. Sci. Tech.} {\bf 6} (2006)
021.

\bibitem{Lo}  Lotka A. J., Undamped oscillations derived from the law of mass action, \textit{J.
Amer. Chem. Soc.} {\bf 42}(1920), 1595--1599.


\bibitem{6} Lyubich Yu. I., \textit{Mathematical structures in population genetics},
 Springer-Verlag,  (1992).


\bibitem{27} Mukhamedov F., Jamal A.H. M., On $\xi^s$-quadratic stochastic
operators in 2-dimensional simplex, In book:  \textit{Proc. the 6th
IMT-GT Conf. Math., Statistics and its Applications (ICMSA2010)},
Kuala Lumpur, 3-4 November 2010, Universiti Tunku Abdul Rahman,
Malaysia, 2010, pp. 159--172.

\bibitem{MQF} Mukhamedov F., Qaralleh I., Rozali W.N.F.A.W,
On $\xi^{a}$-quadratic stochastic operators on 2-D simplex
(submetted).

\bibitem{28} Mukhamedov F., Saburov M., Jamal A.H.M., On dynamics of $\xi^s$-quadratic stochastic
operators, \textit{Inter. Jour. Modern Phys.: Conference Series}
{\bf 9} (2012), 299--307.


\bibitem{8} Plank M.,  Losert V.,
Hamiltonian structures for the n-dimensional Lotka-Volterra
equations, \textit{J. Math. Phys.} {\bf 36} (1995) 3520--3543.

\bibitem{22} Rozikov U.A., Shamsiddinov N.B., On non-Volterra quadratic
stochastic operators generated by a product measure. {\it Stochastic
Anal. Appl.} {\bf 27} (2009), no.2, p.353---362.

\bibitem{23} Rozikov U.A., Zada A. On $\ell$- Volterra Quadratic stochastic
operators. {\it Inter. Journal Biomath.} {\bf 3} (2010), 143--159.

\bibitem{231} Rozikov U.A., Zada A. $\ell$-Volterra quadratic
stochastic operators: Lyapunov functions, trajectories, {\it Appl.
Math. \& Infor. Sci.} {\bf 6} (2012), 329--335.

\bibitem{25} Rozikov U.A., Zhamilov U.U., On $F$-quadratic stochastic operators.
{\it Math. Notes.} {\bf 83} (2008), 554--559.

\bibitem{26} Rozikov U.A., Zhamilov U.U. On dynamics of strictly
non-Volterra quadratic operators defined on the two dimensional
simplex.  {\it Sbornik: Math.} {\bf 200} (2009), no.9, 81--94.


\bibitem{SM} Saburov M., Some strange properties of quadratic stochastic Volterra operators, \textit{World Applied Sciences Journal} \textbf{21} (2013) 94-97.

\bibitem{16} Stein, P.R. and Ulam, S.M.,
{\it Non-linear transformation studies on electronic computers},
1962, Los Alamos Scientific Lab., N. Mex.


\bibitem{7} Udwadia F.E., Raju N., Some global properties of a pair of
coupled maps: quasi-symmetry, periodicity and syncronicity,
\textit{Physica D} \textbf{111} (1998) 16-26.


\bibitem{9} Ulam S.M., {\it Problems in Modern Math.,} New York; Wiley,
1964.

\bibitem{17} Zakharevich M.I., The behavior of trajectories and the ergodic
hypothesis for quadratic mappings of a simplex, \textit{ Russian
Math. Surveys.} {\bf 33} (1978), 207--208.

\bibitem{V1} Volterra V., Lois de fluctuation de la population de plusieurs
esp\`{e}ces coexistant dans le m\^{e}me milieu, \textit{Association
Franc. Lyon} {\bf 1926} (1927), 96--98 (1926).


\end{thebibliography}
\end{document}